\theoremstyle{plain}
\newtheorem{theorem}{Theorem}[section]
\newtheorem{lemma}[theorem]{Lemma}
\newtheorem{proposition}[theorem]{Proposition}
\newtheorem{corollary}[theorem]{Corollary}
\newtheorem{question}[theorem]{Question}
\theoremstyle{definition}   % 来自amsthm宏包的定理样式设置命令 \theoremstyle{风格选项:  plain, definition, remark}
\newtheorem{definition}[theorem]{Definition}
    \def\subsection{\@startsection{subsection}{2}%
    \z@{.5\linespacing\@plus.7\linespacing}{.3\linespacing}%
    {\normalfont\bfseries}}
    \newcommand{\LeftEqNo}{\let\veqno\@@leqno}        % 让公式编号出现在左侧
\numberwithin{equation}{section}                      % 让公式按 section 进行编号
\begin{document}

\

\vspace{-2cm}

\title[Irreducible operators in von Neumann algebras]{Irreducible operators in von Neumann algebras}

\author{Sukitha Adappa}
\address{Sukitha Adappa, School of Mathematical and Statistical Sciences, Arizona State University, Tempe, 85281, US}
\email{Sukitha.Adappa@asu.edu}

\author{Minghui Ma}
\address{Minghui Ma, School of Mathematical Sciences, Dalian University of Technology, Dalian, 116024, China}
\email{minghuima@dlut.edu.cn}

\author{Junhao Shen}
\address{Junhao Shen, Department of Mathematics \& Statistics, University of New Hampshire, Durham, 03824, US}
\email{junhao.shen@unh.edu}

\author{Rui Shi}
\address{Rui Shi, School of Mathematical Sciences, Dalian University of Technology, Dalian, 116024, China}
\email{ruishi@dlut.edu.cn}
\thanks{Rui Shi, Minhua Ma, and Shanshan Yang are partly supported by NSFC (No.12271074), the Fundamental Research Funds for the Central Universities (No.DUT23LAB305). Minghui Ma is also supported by the Postdoctoral Fellowship Program of CPSF (No. GZC20252022).}

\author{Shanshan Yang}
\address{Shanshan Yang, School of Mathematical Sciences, Dalian University of Technology, Dalian, 116024, China}
\email{yss@mail.dlut.edu.cn}

%\thanks{Minghui Ma was supported by the Postdoctoral Fellowship Program of CPSF (No. GZC20252022).}
%\cortext[cor1]{Corresponding author}

\keywords{Irreducible operators, central supports, von Neumann algebras}
\subjclass[2020]{Primary 46L10; Secondary  47C15}

\begin{abstract}
Let $\mathcal{M}$ be a separable von Neumann algebra with center $\mathcal{Z}(\mathcal{M})$.
An operator $T$ in $\mathcal{M}$ is called irreducible if the von Neumann algebra $W^*(T)$ generated by $T$ has trivial relative commutant, i.e., $W^*(T)'\cap\mathcal{M}=\mathcal{Z}(\mathcal{M})$.
In this paper, we show that irreducible operators in $\mathcal{M}$ form a norm-dense $G_\delta$ set, which is a generalization of Halmos' theorem.
Moreover, we prove that every operator in $\mathcal{M}$ is the sum of two irreducible operators, which is an analogue of Radjavi's theorem.
\end{abstract}

\maketitle
%\end{frontmatter}

\section{Introduction}

Let $\mathcal{B}(\mathcal{H})$ denote the algebra of all bounded linear operators on a complex Hilbert space $\mathcal{H}$.
A \emph{von Neumann algebra} is a self-adjoint unital subalgebra of $\mathcal{B}(\mathcal{H})$ that is closed in the weak-operator topology.
Every von Neumann algebra can be decomposed into the direct sum
of those of type $\mathrm{I}_n$, type $\mathrm{I}_\infty$, type $\mathrm{II}_1$, type $\mathrm{II}_\infty$, and type $\mathrm{III}$ by the type decomposition theorem for von Neumann algebras \cite[Theorem 6.5.2]{KR2}.
A von Neumann algebra is called \emph{separable} if it has a separable predual space.
A \emph{factor} is a von Neumann algebra whose center consists of scalar multiples of the identity.
By definition, $\mathcal{B}(\mathcal{H})$ is a factor of type $\mathrm{I}$.

Let $\mathcal{M}$ be a factor with identity $I$.
Recall that an operator $T$ is said to be \emph{irreducible} in $\mathcal{M}$ if $PT=TP$ implies that either $P=0$ or $P=I$ for each projection $P$ in $\mathcal{M}$, i.e., $W^*(T)'\cap\mathcal{M}=\mathbb{C}I$, where $W^*(T)$ denotes the von Neumann algebra generated by $T$.
Otherwise, $T$ is called \emph{reducible} in $\mathcal{M}$.
In 1968, P. Halmos \cite{Hal} proved that the set of irreducible operators on a separable complex Hilbert space $\mathcal{H}$ is a norm-dense $G_\delta$ subset of $\mathcal{B}(\mathcal{H})$.
Later, H. Radjavi and P. Rosenthal \cite{RR} provided a very short proof of Halmos' theorem.
% For recent progress regarding the density of irreducible operators in $\mathcal{B}(\mathcal{H})$, we refer the reader to \cite{FJMSSW}.
In 1976, D. Voiculescu \cite{Voi} proved a noncommutative Weyl-von Neumann theorem and obtained the norm-density of reducible operators in $\mathcal{B}(\mathcal{H})$.
Recently, D. Hadwin et al. \cite{HMS} established the norm-density of reducible operators in separable properly infinite factors.
Without the assumption of separability, J. Shen and R. Shi \cite[Theorem 6.6]{SS19} proved that the set of reducible operators in every non-$\Gamma$ type $\mathrm{II}_1$ factor $\mathcal{M}$ is nowhere norm-dense.
In particular, the set of irreducible operators in $\mathcal{M}$ is norm-dense.
As an example, let $\mathbb{F}_S$ be the free group generated by an uncountable set $S$.
Then the group factor $\mathcal{L}(\mathbb{F}_S)$ is neither separable nor singly generated, but the set of irreducible operators in $\mathcal{L}(\mathbb{F}_S)$ is norm-dense.
On the other hand, there exists a large class of nonseparable type $\mathrm{II}_1$ factors containing no irreducible operator \cite[Example 5.3]{SS19}.
By replacing $W^*(T)'\cap\mathcal{M}=\mathbb{C}I$ with $W^*(T)'\cap\mathcal{M}=\mathcal{Z}(\mathcal{M})$ in the definition of irreducible operators in factors, it is natural to generalize the irreducibility of operators in von Neumann algebras as follows.

\begin{definition}\label{def-IR-opt}
Let $\mathcal{M}$ be a von Neumann algebra with center $\mathcal{Z}(\mathcal{M})$.
A von Neumann subalgebra $\mathcal{P}$ of $\mathcal{M}$ is said to have \emph{trivial relative commutant} if the equality $\mathcal{P}^{\prime} \cap \mathcal{M} = \mathcal{Z}(\mathcal{M})$ holds.
An operator $T$ in $\mathcal{M}$ is \emph{irreducible} if the von Neumann algebra $W^*(T)$ generated by $T$ has trivial relative commutant, i.e., $W^*(T)'\cap\mathcal{M}=\mathcal{Z}(\mathcal{M})$.
\end{definition}

A. Sinclair and R. Smith \cite[Theorem 8]{SS} proved that every separable type $\mathrm{II}_1$ von Neumann algebra contains an injective von Neumann subalgebra which has trivial relative commutant.
By matrix construction techniques, we show in \Cref{prop P+iB} that there is an irreducible operator whose real part is a projection in each separable von Neumann algebra.
Recall that an operator $T$ in a von Neumann algebra $\mathcal{M}$ is called a \emph{generator} if $\mathcal{M}=W^*(T)$.
Clearly, every generator of $\mathcal{M}$ is irreducible, and it is an open question whether every separable von Neumann algebra is singly generated.
The authors of \cite{WFS} proved the norm-density of irreducible operators in separable factors, which is a generalization of Halmos' theorem.
A refined version in semifinite factors can be found in \cite{Shi}.
In this paper, we mainly investigate two properties of irreducible operators in separable von Neumann algebras with nontrivial center. To this point, we need to develop new methods. As a generalization of Halmos' theorem into von Neumann algebras, we prove the following result.

\begin{theorem}\label{thm norm-dense}
    Let $\mathcal{M}$ be a separable von Neumann algebra and $\mathcal{I}$ a weak-operator dense norm-closed ideal in $\mathcal{M}$.
    Then for any $T\in\mathcal{M}$ and $\varepsilon>0$, there exists an operator $K\in\mathcal{I}$ with $\|K\|<\varepsilon$ such that $T+K$ is irreducible in $\mathcal{M}$.
\end{theorem}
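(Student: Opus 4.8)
\emph{Strategy.} The plan is to carry out, relative to the ideal $\mathcal{I}$ and sensitive to the center, an analogue of the Radjavi--Rosenthal proof of Halmos' theorem. Write $T=A+iB$ with $A=A^{*}$ and $B=B^{*}$; since $W^{*}(T+K)$ is generated by the self-adjoint operators $A+\operatorname{Re}K$ and $B+\operatorname{Im}K$, it suffices to produce self-adjoint $K_{1},K_{2}\in\mathcal{I}$ with $\|K_{1}\|,\|K_{2}\|<\varepsilon/2$ and $\{A+K_{1},\,B+K_{2}\}'\cap\mathcal{M}=\mathcal{Z}(\mathcal{M})$. The first ingredient is structural: since $\mathcal{M}$ has real rank zero, $\mathcal{I}$ admits an increasing approximate unit of projections, and weak-operator density of $\mathcal{I}$ forces the supremum of these projections to be $I$; as the strong-operator topology is metrizable on bounded subsets of the separable algebra $\mathcal{M}$, we may extract a sequence and thus obtain pairwise orthogonal nonzero projections $\{Q_{n}\}_{n\ge1}\subseteq\mathcal{I}$ with $\sum_{n}Q_{n}=I$. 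Any operator whose ``matrix'' $(Q_{m}XQ_{n})_{m,n}$ has a norm-summable family of entries then belongs to $\mathcal{I}$, and this is what makes the perturbations below admissible. (A finite-dimensional central summand contributes only finitely many $Q_{n}$; there the statement reduces to the classical density of generators in matrix algebras and is treated directly.)

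\emph{The type $\mathrm{I}$ case.} After reducing to a homogeneous summand I would first establish a Weyl--von Neumann theorem relative to $\mathcal{I}$: refining the $Q_{n}$ above into abelian projections (sub-projections of ideal projections remain in $\mathcal{I}$), there is a small self-adjoint $K_{1}\in\mathcal{I}$ with $A_{1}:=A+K_{1}=\sum_{n}a_{n}Q_{n}$ (norm-convergent), the central-valued ``eigenvalues'' $a_{n}=a_{n}^{*}\in\mathcal{Z}(\mathcal{M})Q_{n}$ chosen so separated that the $Q_{n}$ are precisely the spectral projections of $A_{1}$; then $\{A_{1}\}'\cap\mathcal{M}=\bigoplus_{n}Q_{n}\mathcal{M}Q_{n}=\bigoplus_{n}\mathcal{Z}(\mathcal{M})Q_{n}$, the corner of an abelian projection $Q_{n}$ being $\mathcal{Z}(\mathcal{M})Q_{n}$. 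Next I would perturb $B$ by a small self-adjoint ``tridiagonal'' $K_{2}\in\mathcal{I}$ chosen so that each connecting entry $Q_{n}(B+K_{2})Q_{n+1}$ has full central support $z(Q_{n})\wedge z(Q_{n+1})$; only those entries at which $Q_{n}BQ_{n+1}$ degenerates on a set of positive central measure need correcting, and such corrections can be taken of arbitrarily small norm with norms tending to $0$, so that $K_{2}\in\mathcal{I}$. For $X=\sum_{n}z_{n}Q_{n}\in\{A_{1}\}'\cap\mathcal{M}$ with $z_{n}\in\mathcal{Z}(\mathcal{M})$, the identity $0=Q_{n}[X,B+K_{2}]Q_{n+1}=(z_{n}-z_{n+1})\,Q_{n}(B+K_{2})Q_{n+1}$ together with the full central support of the connecting entry forces $z_{n}=z_{n+1}$ for all $n$; hence $X=z_{1}I\in\mathcal{Z}(\mathcal{M})$, and $\{A_{1},B+K_{2}\}'\cap\mathcal{M}=\mathcal{Z}(\mathcal{M})$.

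\emph{The type $\mathrm{II}$ and $\mathrm{III}$ case.} When $\mathcal{M}$ has no abelian projections the corners $Q_{n}\mathcal{M}Q_{n}$ are far from abelian, so I would instead pass to the central decomposition $\mathcal{M}=\int_{X}^{\oplus}\mathcal{M}_{x}\,d\mu(x)$, where (on this part) the $\mathcal{M}_{x}$ are factors of type $\mathrm{II}_{1}$, $\mathrm{II}_{\infty}$, or $\mathrm{III}$; the fibers $\mathcal{I}_{x}$ of $\mathcal{I}$ are weak-operator dense norm-closed ideals of the $\mathcal{M}_{x}$, and $W^{*}(T)'\cap\mathcal{M}=\mathcal{Z}(\mathcal{M})$ holds if and only if $W^{*}(T_{x})'\cap\mathcal{M}_{x}=\mathbb{C}I_{x}$ for $\mu$-almost every $x$. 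The density of irreducible operators in separable factors \cite{WFS} --- whose proof furnishes, via matrix constructions in the spirit of \Cref{prop P+iB}, an irreducible ``seed'' inside each separable factor and each of its corners --- provides, for a.e.\ $x$ and any prescribed $\delta_{x}>0$, some $K_{x}\in\mathcal{I}_{x}$ with $\|K_{x}\|<\delta_{x}$ and $T_{x}+K_{x}$ irreducible in $\mathcal{M}_{x}$. Stratifying $X$ by the measurable type of $\mathcal{M}_{x}$ and using that irreducibility is a Borel ($G_{\delta}$) condition, a von Neumann-type measurable selection theorem yields a measurable field $x\mapsto K_{x}$ with these properties; choosing the $\delta_{x}$ appropriately --- bounded by $\varepsilon/2$ and decaying fast enough to force $K=(K_{x})\in\mathcal{I}$, not merely $K_{x}\in\mathcal{I}_{x}$ for a.e.\ $x$ --- produces $K\in\mathcal{I}$, $\|K\|<\varepsilon$, with $T+K$ irreducible. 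Combined with the type $\mathrm{I}$ construction on the complementary central summand, this completes the proof.

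\emph{Main obstacle.} The serious difficulty lies in the type $\mathrm{II}$/$\mathrm{III}$ step: making the fiberwise constructions uniform enough to admit a Borel selection, and controlling the ideal so that the assembled field lies in $\mathcal{I}$ rather than only in $\{K\in\mathcal{M}:K_{x}\in\mathcal{I}_{x}\ \text{a.e.}\}$. This is precisely where new methods are needed, and where \Cref{prop P+iB} is indispensable: in the absence of minimal projections (and of Cartan masas in general), the naive ``diagonalize-then-connect'' recipe no longer cuts the relative commutant down to the center, and one must instead connect mutually equivalent corners and invoke an explicit irreducible seed operator inside a single corner. A secondary technical point is that the required Weyl--von Neumann theorem must be proved relative to an \emph{arbitrary} weak-operator dense norm-closed ideal, not just the ideal of compact operators, which even in the type $\mathrm{I}$ case demands a version measurable over the center.
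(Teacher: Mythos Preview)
Your proposal has a genuine gap in the type $\mathrm{II}$/$\mathrm{III}$ step, and you yourself flag it: assembling a measurable family $x\mapsto K_x$ of fiberwise perturbations into a single $K\in\mathcal{I}$ is not a technicality. Even granting a Borel selection (which you do not carry out), the assembled operator lies a priori only in $\{K\in\mathcal{M}: K_x\in\mathcal{I}_x\ \text{a.e.}\}$, and there is no general reason this coincides with $\mathcal{I}$; for a nontrivial norm-closed ideal in a properly infinite algebra this containment can be strict. The ``decaying $\delta_x$'' idea does not help, since membership in $\mathcal{I}$ is not governed by the norms $\|K_x\|$ alone. As written, the type $\mathrm{II}$/$\mathrm{III}$ case is a plan, not a proof.

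The paper avoids direct integrals entirely and gives a single, type-independent argument. The key input you are missing is that Weyl--von Neumann diagonalization works globally: by Akemann--Pedersen \cite[Theorem~4.8]{AP}, in \emph{any} separable von Neumann algebra with a weak-operator dense norm-closed ideal $\mathcal{I}$, the real part of $T$ can be perturbed by a small self-adjoint element of $\mathcal{I}$ to $\sum_j \alpha_j E_j$ with \emph{scalar} $\alpha_j$ and projections $E_j\in\mathcal{I}$ summing to $I$. There is no need to restrict to type~$\mathrm{I}$ or to use central-valued eigenvalues. From there the paper refines each $E_j$ so that $E_jBE_j$ is nearly scalar, plants in every resulting corner $F_j\mathcal{M}F_j$ an irreducible seed $P_j+iH_j$ (\Cref{prop P+iB}), and---crucially---perturbs \emph{every} off-diagonal entry $F_jBF_k$ ($j\ne k$) to have central support $C_{F_j}C_{F_k}$ (\Cref{lem CY}). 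Irreducibility then follows from the criterion \Cref{lem ETE-irreducible}.

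A smaller issue in your type~$\mathrm{I}$ sketch: from $(z_n-z_{n+1})\,Q_n(B+K_2)Q_{n+1}=0$ with full central support of the connecting entry you obtain only $(z_n-z_{n+1})C_{Q_n}C_{Q_{n+1}}=0$, not $z_n=z_{n+1}$. The conclusion $X\in\mathcal{Z}(\mathcal{M})$ requires either $C_{Q_n}=I$ for all $n$ (which you have not arranged inside $\mathcal{I}$) or full-central-support connectors between \emph{all} pairs $j\ne k$, as in \Cref{lem ETE-irreducible}. The paper's all-pairs perturbation, combined with the corner seeds, is exactly what makes the argument go through without abelian projections---and hence uniformly across types.
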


Let $\mathcal{M}=\mathcal{B}(\mathcal{H})$ and $\mathcal{I}=\mathcal{K}(\mathcal{H})$, where $\mathcal{H}$ is a separable infinite-dimensional complex Hilbert space.
By \Cref{thm norm-dense}, for any $T\in\mathcal{B}(\mathcal{H})$ and $\varepsilon>0$, there exists a compact operator $K\in\mathcal{K}(\mathcal{H})$ with $\|K\|<\varepsilon$ such that $T+K$ is irreducible in $\mathcal{B}(\mathcal{H})$.
Moreover, as a consequence of \Cref{thm norm-dense}, the set of irreducible operators in every separable von Neumann algebra is a norm-dense $G_\delta$ set (see \Cref{cor norm-dense}).
Inspired by \cite[Definition 6.1]{SS19} and \Cref{thm norm-dense}, we propose the following question.

\begin{question}
    Let $\mathcal{N}$ be a von Neumann algebra, $\mathcal{M}$ a von Neumann subalgebra of $\mathcal{N}$, and
    \begin{equation*}
      \mathrm{IR}(\mathcal{M},\mathcal{N})=\{T\in\mathcal{M}\colon W^*(T)'\cap\mathcal{N}=\mathcal{M}'\cap\mathcal{N}\}.
    \end{equation*}
    Suppose that $\mathcal{M}$ is separable.
    Is the set $\mathrm{IR}(\mathcal{M},\mathcal{N})$ norm-dense in $\mathcal{M}$?
\end{question}

Note that the above question is true for $\mathcal{M}=\mathcal{N}$ by \Cref{thm norm-dense}, and we will write $\operatorname{IR}(\mathcal{M})$ instead of $\operatorname{IR}(\mathcal{M},\mathcal{M})$.
Moreover, if $\mathcal{N}=\mathcal{B}(\mathcal{H})$, then $\mathrm{IR}(\mathcal{M},\mathcal{B}(\mathcal{H}))$ becomes $\operatorname{Gen}(\mathcal{M})$, the set of all generators of $\mathcal{M}$, by the double commutant theorem \cite[Theorem 5.3.1]{KR1}. In general, $\operatorname{Gen}(\mathcal{M})\subseteq \mathrm{IR}(\mathcal{M},\mathcal{N}) \subseteq \mathrm{IR}(\mathcal{M})$.
The question is quite interesting when $\mathcal{N}$ is a type $\mathrm{II}_1$ factor and $\mathcal{M}$ is an irreducible subfactor with finite index, i.e., $\mathcal{M}'\cap\mathcal{N}=\mathbb{C}I$ and $[\mathcal{N}\colon\mathcal{M}]<\infty$.

P. Fillmore and D. Topping \cite{FT} showed that every operator in $\mathcal{B}(\mathcal{H})$ is the sum of four irreducible operators, where $\mathcal{H}$ is a separable infinite-dimensional complex Hilbert space.
H. Radjavi \cite{Rad} improved this result by proving that every operator in $\mathcal{B}(\mathcal{H})$ is the sum of two irreducible operators.
With a different proof, the authors of \cite{SS20} proved that every operator in a separable factor is the sum of two irreducible operators.
By developing the techniques with respect to the centers of von Neumann algebras, we establish an analogue in von Neumann algebras as follows.

\begin{theorem}\label{thm sum-irreducible}
    Let $\mathcal{M}$ be a separable von Neumann algebra.
    Then every operator in $\mathcal{M}$ is the sum of two irreducible operators in $\mathcal{M}$.
\end{theorem}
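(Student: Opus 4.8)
The plan is to deduce \Cref{thm sum-irreducible} from \Cref{thm norm-dense} by a soft Baire-category argument, in the same spirit in which Radjavi's theorem \cite{Rad} follows from Halmos's theorem \cite{Hal}. The one ingredient I use beyond \Cref{thm norm-dense} is that the set $\operatorname{IR}(\mathcal{M})$ of irreducible operators of $\mathcal{M}$ is not merely norm-dense (which is \Cref{thm norm-dense} applied with $\mathcal{I}=\mathcal{M}$) but is a norm-dense $G_\delta$ subset of $\mathcal{M}$; this is \Cref{cor norm-dense}, and it is the substantive point. The $G_\delta$ part is where the presence of a nontrivial center $\mathcal{Z}(\mathcal{M})$ must be dealt with: in a factor one detects reducibility through commutation with non-trivial projections, whereas here one must instead control, in a norm-closed and countable fashion, when an element of $W^*(T)'\cap\mathcal{M}$ fails to lie in $\mathcal{Z}(\mathcal{M})$. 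Granting \Cref{cor norm-dense}, the remaining argument is purely topological.

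So fix $T\in\mathcal{M}$. As a $C^*$-algebra, $\mathcal{M}$ is a Banach space, hence a Baire space in its norm topology. The map $\Phi\colon\mathcal{M}\to\mathcal{M}$, $\Phi(X)=T-X$, is an affine bijective isometry and therefore a self-homeomorphism of $\mathcal{M}$; consequently $\Phi\big(\operatorname{IR}(\mathcal{M})\big)=T-\operatorname{IR}(\mathcal{M})$ is again a norm-dense $G_\delta$ set. The intersection of two norm-dense $G_\delta$ subsets of a Baire space is a dense $G_\delta$, in particular nonempty, so there exists
\[
A\in\operatorname{IR}(\mathcal{M})\cap\big(T-\operatorname{IR}(\mathcal{M})\big).
\]
Then $A$ is irreducible and so is $T-A$, and $T=A+(T-A)$ exhibits $T$ as a sum of two irreducible operators. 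In fact this gives slightly more: for every $T\in\mathcal{M}$ the set of $A\in\mathcal{M}$ for which both $A$ and $T-A$ are irreducible is a dense $G_\delta$ in $\mathcal{M}$ (it equals $\operatorname{IR}(\mathcal{M})\cap\Phi(\operatorname{IR}(\mathcal{M}))$, since $\Phi=\Phi^{-1}$).

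Thus the genuine difficulty of \Cref{thm sum-irreducible} is entirely absorbed into \Cref{cor norm-dense}, and hence into \Cref{thm norm-dense}: once the $G_\delta$-density of $\operatorname{IR}(\mathcal{M})$ is available, the decomposition is automatic, and that is the step I expect to be the real obstacle (already carried out by the time \Cref{thm sum-irreducible} is proved). If one instead preferred an argument not relying on the $G_\delta$ property, closer to \cite{Rad,SS20}, one would reduce along central projections: on a summand isomorphic to $\bigoplus_k\mathbb{M}_{n_k}(\mathbb{C})$ the problem decouples blockwise, since such an element is already block-diagonal and so $\bigoplus_k A_k$ is irreducible as soon as each $A_k$ generates $\mathbb{M}_{n_k}(\mathbb{C})$, which reduces matters to the elementary fact (cf. \cite{SS20}) that every matrix is a sum of two generators with norms controlled by $\|T\|$; on a diffuse abelian summand one simply takes $T=T+0$; and on the remaining, everywhere non-abelian summand one runs a Radjavi--\cite{SS20}-type operator-matrix construction fibrewise over the center, the extra work there being to keep the fibrewise data uniformly bounded and measurable in the central parameter and to verify the relative-commutant identity by integrating over $\mathcal{Z}(\mathcal{M})$. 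The Baire route sidesteps all of this bookkeeping.
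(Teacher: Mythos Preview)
Your argument is correct, and it is genuinely different from the paper's proof. The paper does not invoke Baire category at all; instead it gives an explicit construction. First it splits off, via a maximal family of central projections, the largest central summand on which $T$ is already irreducible (there $T_1=T_2=\tfrac12 T$ works). On the complementary summand it applies \Cref{lem completely-reducible} to $\mathcal{N}=W^*(T)'\cap\mathcal{M}$ to produce a projection $P$ commuting with $T$ with $C_P=C_{I-P}=I$ and $P\precsim I-P$, then uses \Cref{lem P<I-P} to tile $I-P$ by subequivalent pieces $\{P_j\}$ with partial isometries $V_j$ into $P$. Picking an irreducible $A_0+iB_0$ in $P\mathcal{M}P$ (here only the density part of \Cref{cor norm-dense} is used), the paper builds operators $X,Y$ from $A_0,B_0,\{V_j\}$ and sets $T_1=A+X+iY$, $T_2=-X+i(B-Y)$, verifying by hand that $W^*(T_i)\supseteq W^*(\{A_0,B_0\}\cup\{V_j\})$, which has trivial relative commutant.

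What each route buys: your Baire argument is dramatically shorter and shows more, namely that the set of admissible first summands is itself a dense $G_\delta$; it also makes transparent that the entire content of \Cref{thm sum-irreducible} is already in \Cref{cor norm-dense}. The paper's proof, by contrast, is constructive and yields an explicit recipe for $T_1,T_2$ in terms of the reducing structure of $T$; it also only consumes the \emph{density} half of \Cref{cor norm-dense} (applied to a corner $P\mathcal{M}P$), whereas your proof genuinely needs the $G_\delta$ half. Your closing sketch of a direct-integral/fibrewise alternative is plausible in spirit but not the route the paper takes; the paper avoids measurable-selection issues entirely by working globally with central supports via \Cref{lem completely-reducible} and \Cref{lem P<I-P}.
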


Before closing the introduction, we raise two more problems which we hope will contribute to a deeper understanding of irreducible operators in von Neumann algebras.
Inspired by \Cref{def-IR-opt}, we introduce \emph{strongly irreducible operators} in von Neumann algebras, which are a  natural analogue of Jordan blocks in $M_n(\mathbb{C})$.

\begin{definition}\label{def-SIR-opt}
An operator $T$ in a von Neumann algebra $\mathcal{M}$ is said to be \emph{strongly irreducible} if $XTX^{-1}$ is irreducible in $\mathcal{M}$ for every invertible operator $X$ in $\mathcal{M}$.
Denote by $\operatorname{SIR}(\mathcal{M})$ the set of strongly irreducible operators in $\mathcal{M}$.
\end{definition}

Note that the spectrum of each strongly irreducible operator is connected in factors.
For $\mathcal{B}(\mathcal{H})$, we write $\operatorname{SIR}(\mathcal{H})$ instead of $\operatorname{SIR}(\mathcal{B}(\mathcal{H}))$.
D. Herrero and C. Jiang proved in \cite[Main theorem]{HJ-1990} that the norm-closure of $\operatorname{SIR}(\mathcal{H})$ is the set of all operators whose spectra are connected.
Also, from \cite[Theorem 8.2]{FJLX-2013}, it yields that $\operatorname{SIR}(\mathcal{R})$ is nonempty for the hyperfinite type $\mathrm{II}_1$ factor $\mathcal{R}$.

\begin{question}  \label{question-01}
Are there strongly irreducible operators in each factor $\mathcal{M}$?
If so, is it true that
    \begin{equation*}
        \overline{\operatorname{SIR}(\mathcal{M})}^{\Vert\cdot\Vert} = \{T\in \mathcal{M}\colon\sigma(T)~\text{is connected}\} ?
    \end{equation*}
\end{question}

In comparison with \cite{Rad}, it follows from \cite[Theorem 3.1.1]{JW-2006} that every operator in $\mathcal{B}(\mathcal{H})$ is the sum of two strongly irreducible operators.
In light of this, it is natural to raise the following question.

\begin{question}\label{question-02}
Is every operator in a factor $\mathcal{M}$ the sum of two strongly irreducible operators?
Equivalently, is the equality $\mathcal{M}=\operatorname{SIR}(\mathcal{M})+\operatorname{SIR}(\mathcal{M})$ true?
\end{question}

\section{Proofs}

We start by presenting some basic results that will be used later.
The proofs of \Cref{thm norm-dense} and \Cref{thm sum-irreducible} will be given in \Cref{subsec norm-dense} and \Cref{subsec sum-irreducible}, respectively.

\subsection{Preliminaries}\label{subsec preliminary}

Recall that the \emph{central support} $C_T$ of an operator $T$ in a von Neumann algebra $\mathcal{M}$ is the minimal central projection with the property $C_TT=T$.
Clearly, $C_{ZT}=ZC_T$ for every central projection $Z$ in $\mathcal{M}$ by definition.
In the next two lemmas, we perturb an operator $T\in P\mathcal{M}Q$ to be $Y$ such that $C_Y=C_PC_Q$.
We will use \Cref{lem CY} to construct the operators $Y_{jk}$ in the proof of \Cref{lem diagonal}.

\begin{lemma}\label{lem CV}
Let $P$ and $Q$ be projections in a von Neumann algebra $\mathcal{M}$.
Then there exists a partial isometry $V$ in $P\mathcal{M}Q$ such that $C_V=C_PC_Q$.
\end{lemma}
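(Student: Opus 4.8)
The plan is to produce the partial isometry $V$ in one stroke from the comparison theorem for von Neumann algebras \cite{KR2}, which splits $\mathcal{M}$ into two central pieces on which $P$ and $Q$ become comparable in opposite directions, and then glue the two resulting partial isometries. Before that, note the easy half: for \emph{every} partial isometry $V\in P\mathcal{M}Q$ one automatically has $C_V\le C_PC_Q$, since $V=PVQ$ forces $VV^*\le P$ and $V^*V\le Q$, and because $V$ implements $V^*V\sim VV^*$ these two projections share a common central support, namely $C_V$; hence $C_V=C_{VV^*}\le C_P$ and $C_V=C_{V^*V}\le C_Q$. So it remains to manufacture a partial isometry in $P\mathcal{M}Q$ whose central support is \emph{at least} $C_PC_Q$. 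To this end, pick a central projection $Z$ with $ZP\precsim ZQ$ and $(I-Z)Q\precsim(I-Z)P$ (Murray--von Neumann subequivalence).

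The first relation yields a partial isometry $V_1$ with range projection $V_1V_1^*=ZP$ and source projection $V_1^*V_1\le ZQ$ — one takes the adjoint of the partial isometry coming directly out of $ZP\precsim ZQ$, so that the source lies under $Q$ and the range under $P$. The second yields $V_2$ with $V_2^*V_2=(I-Z)Q$ and $V_2V_2^*\le(I-Z)P$. Both $V_1$ and $V_2$ lie in $P\mathcal{M}Q$. Since $V_1$ is supported under $Z$ and $V_2$ under $I-Z$, the cross terms $V_1V_2^*$ and $V_1^*V_2$ vanish, so $V:=V_1+V_2$ is again a partial isometry in $P\mathcal{M}Q$, with $VV^*=ZP+V_2V_2^*$ a sum of two orthogonal projections.

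Now compute $C_V=C_{VV^*}$. The summands $ZP$ and $V_2V_2^*$ have \emph{orthogonal} central supports: $C_{ZP}=ZC_P$ lies under $Z$, while $C_{V_2V_2^*}=C_{V_2^*V_2}=C_{(I-Z)Q}=(I-Z)C_Q$ lies under $I-Z$ (using $C_{ZT}=ZC_T$). Hence $C_{VV^*}$ is the supremum, i.e.\ the sum, of these, so $C_V=ZC_P+(I-Z)C_Q$. Finally, the two subequivalences force $C_{ZP}\le C_{ZQ}$ and $C_{(I-Z)Q}\le C_{(I-Z)P}$, that is $ZC_P\le ZC_Q$ and $(I-Z)C_Q\le(I-Z)C_P$; consequently $ZC_P=ZC_PC_Q$ and $(I-Z)C_Q=(I-Z)C_PC_Q$, and therefore $C_V=C_PC_Q$, as required.

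I do not expect a genuine obstacle here; the only care needed is the bookkeeping over the two central halves and the choice of orientations of $V_1,V_2$ so that the glued $V$ really lies in $P\mathcal{M}Q$ and its pieces are mutually orthogonal. If one prefers to avoid invoking the comparison theorem, there is an alternative, slightly longer route: by Zorn's lemma select a maximal family of partial isometries in $P\mathcal{M}Q$ with pairwise orthogonal source projections and pairwise orthogonal range projections, let $V$ be its (strong-operator) sum, which is again a partial isometry in $P\mathcal{M}Q$, and observe that maximality forces the remainders $Q-V^*V$ and $P-VV^*$ to have orthogonal central supports — otherwise, cutting by a common central subprojection and comparing the two nonzero pieces would produce a nonzero partial isometry enlarging the family — from which $C_V\ge C_PC_Q$ follows just as above.
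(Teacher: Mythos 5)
Your proof is correct and follows essentially the same route as the paper: both invoke the comparison theorem to split off a central projection $Z$ with $ZP\precsim ZQ$ and $(I-Z)Q\precsim(I-Z)P$, glue the two resulting partial isometries into $V=V_1+V_2\in P\mathcal{M}Q$, and compute $C_V=ZC_P+(I-Z)C_Q$. The only cosmetic difference is the last step: the paper deduces $C_V=C_PC_Q$ from $C_V\le C_PC_Q$ together with $C_VC_PC_Q=C_PC_Q$, whereas you use that subequivalence forces $ZC_P\le ZC_Q$ and $(I-Z)C_Q\le(I-Z)C_P$; both are valid.
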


\begin{proof}
By the comparison theorem \cite[Theorem 6.2.7]{KR2}, there exists a central projection $Z$ in $\mathcal{M}$ such that $ZP\precsim ZQ$ and $(I-Z)P\succsim(I-Z)Q$.
Let $V_1$ and $V_2$ be partial isometries in $P\mathcal{M}Q$ such that
\begin{equation*}
  V_1V_1^*=ZP,\quad V_1^*V_1\leqslant ZQ,\quad V_2^*V_2=(I-Z)Q, \quad V_2V_2^*\leqslant (I-Z)P.
\end{equation*}
Let $V=V_1+V_2\in P\mathcal{M}Q$.
Then $C_V\leqslant C_PC_Q$.
Since $V_1\in\mathcal{M}Z$ and $V_2\in\mathcal{M}(I-Z)$, we have
\begin{equation*}
  C_V=C_{V_1}+C_{V_2}=ZC_P+(I-Z)C_Q.
\end{equation*}
It follows that $C_VC_PC_Q=C_PC_Q$.
Thus, $C_V=C_PC_Q$.
\end{proof}

\begin{lemma}\label{lem CY}
Let $P$ and $Q$ be projections in a von Neumann algebra $\mathcal{M}$.
Then for any operator $T\in P\mathcal{M}Q$ and $\varepsilon>0$, there exists an operator $Y\in P\mathcal{M}Q$ such that
\begin{equation*}
  \|T-Y\|<\varepsilon\quad\text{and}\quad C_Y=C_PC_Q.
\end{equation*}
\end{lemma}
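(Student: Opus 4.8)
The plan is to perturb $T$ by a tiny multiple of the partial isometry supplied by \Cref{lem CV}, placed on exactly the central slice where the central support of $T$ is deficient. First I would record the elementary fact that $C_T\leqslant C_PC_Q$ for every $T\in P\mathcal{M}Q$: indeed $T=PTQ$ forces $C_T\leqslant C_P$ and $C_T\leqslant C_Q$. Hence $Z_0:=C_PC_Q-C_T$ is a central projection dominated by $C_PC_Q$, and it measures precisely how far $C_T$ falls short of the desired value.

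Next, by \Cref{lem CV} I would choose a partial isometry $V\in P\mathcal{M}Q$ with $C_V=C_PC_Q$. Since $Z_0$ is central, $Z_0V\in P\mathcal{M}Q$, and $C_{Z_0V}=Z_0C_V=Z_0C_PC_Q=Z_0$. Set
\[
  Y:=T+\tfrac{\varepsilon}{2}\,Z_0V\in P\mathcal{M}Q .
\]
Then $\|Y-T\|=\tfrac{\varepsilon}{2}\|Z_0V\|\leqslant\tfrac{\varepsilon}{2}<\varepsilon$, since a partial isometry has norm at most $1$.

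It then remains to verify $C_Y=C_PC_Q$, and the whole point is that the two summands of $Y$ are supported on the orthogonal central projections $C_T$ and $Z_0$. Concretely, $C_TZ_0=0$ gives $C_TY=C_TT=T$ and $Z_0Y=\tfrac{\varepsilon}{2}Z_0V$, whence $(C_T+Z_0)Y=Y$ and therefore $C_Y\leqslant C_T+Z_0=C_PC_Q$. For the reverse inequality, multiply $C_TY=T$ by $C_Y$ to obtain $C_YT=C_YC_TY=C_TC_YY=C_TY=T$, so $C_T\leqslant C_Y$; similarly, multiplying $Z_0Y=\tfrac{\varepsilon}{2}Z_0V$ by $C_Y$ gives $C_Y(Z_0V)=Z_0V$, so $Z_0=C_{Z_0V}\leqslant C_Y$. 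As $C_T$ and $Z_0$ are orthogonal, $C_T+Z_0\leqslant C_Y$, and combining the two inequalities yields $C_Y=C_T+Z_0=C_PC_Q$.

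Essentially all the substance sits in \Cref{lem CV}; the remaining argument is routine bookkeeping with central projections, so there is no real obstacle. The only thing to be slightly careful about is the orthogonality of $C_T$ and $Z_0$, which is exactly what makes the perturbation on the $Z_0$-slice contribute precisely the missing central support without disturbing the $C_T$-part of $T$. Degenerate cases are absorbed automatically: if $T=0$ then $C_T=0$, $Z_0=C_PC_Q$, and $Y=\tfrac{\varepsilon}{2}V$ has $C_Y=C_V=C_PC_Q$; if $C_T=C_PC_Q$ already, then $Z_0=0$ and $Y=T$.
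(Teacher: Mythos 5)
Your proof is correct and follows essentially the same route as the paper: both perturb $T$ by $\tfrac{\varepsilon}{2}$ times a partial isometry from \Cref{lem CV} supported on the central slice $(I-C_T)C_PC_Q$ where $C_T$ is deficient (the paper obtains it by applying \Cref{lem CV} to $(I-C_T)P$ and $(I-C_T)Q$, you by cutting down a $V$ for $P,Q$ with the central projection $Z_0$, which is the same operator up to this bookkeeping). Your verification that $C_Y=C_T+Z_0=C_PC_Q$ is just a more detailed version of the paper's computation.
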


\begin{proof}
Let $P_1=(I-C_T)P$ and $Q_1=(I-C_T)Q$.
By \Cref{lem CV}, there exists a partial isometry $V$ in $P_1\mathcal{M}Q_1$ such that $C_V=C_{P_1}C_{Q_1}=(I-C_T)C_PC_Q$.
Let $Y=T+\frac{\varepsilon}{2}V\in P\mathcal{M}Q$.
Then $C_Y\leqslant C_PC_Q$ and
\begin{equation*}
  C_Y=C_T+C_V=C_T+(I-C_T)C_PC_Q.
\end{equation*}
It follows that $C_YC_PC_Q=C_PC_Q$.
Therefore, $Y$ has the desired property.
\end{proof}

The following lemma characterizes irreducible operators in the direct sum of a family of von Neumann algebras.

\begin{lemma}\label{lem direct-sum}
Suppose that $\mathcal{M}=\bigoplus_{j\in\Lambda}\mathcal{M}_j$, where $\{\mathcal{M}_j\}_{j\in\Lambda}$ is a family of von Neumann algebras.
Let $T=\bigoplus_{j\in\Lambda}T_j\in\mathcal{M}$.
Then $T$ is irreducible in $\mathcal{M}$ if and only if $T_j$ is irreducible in $\mathcal{M}_j$ for each $j\in\Lambda$.
\end{lemma}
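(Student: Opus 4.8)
The plan is to reduce the assertion to a componentwise statement via the central projections $Z_j\in\mathcal{Z}(\mathcal{M})$ that are the identities of the summands $\mathcal{M}_j$. The first move I would make is to avoid working with $W^*(T)$ directly (which in general is only a \emph{proper} von Neumann subalgebra of $\bigoplus_{j}W^*(T_j)$) and instead use that $W^*(T)$ is generated as a von Neumann algebra by $\{T,T^*\}$, so that
\[
W^*(T)'\cap\mathcal{M}=\{T,T^*\}'\cap\mathcal{M}.
\]
Thus the relative commutant is detected purely by which elements of $\mathcal{M}$ commute with $T$ and with $T^*$, and the same remark applies in each $\mathcal{M}_j$.

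Next I would decompose a general $S\in\mathcal{M}$ as $S=\bigoplus_{j\in\Lambda}S_j$ with $S_j=SZ_j\in\mathcal{M}_j$ and $\sup_j\|S_j\|=\|S\|$, noting conversely that any bounded family $(S_j)$ with $S_j\in\mathcal{M}_j$ reassembles to an element of $\mathcal{M}$. Using that each $Z_j$ is central in $\mathcal{M}$ and that $T_j=TZ_j$, a one-line computation gives, for each $j$, that $S_jT_j=T_jS_j$ exactly when $SZ_j T Z_j=TZ_j S Z_j$, so $S\in\{T,T^*\}'\cap\mathcal{M}$ if and only if $S_j\in\{T_j,T_j^*\}'\cap\mathcal{M}_j$ for every $j\in\Lambda$. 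Hence
\[
W^*(T)'\cap\mathcal{M}=\bigoplus_{j\in\Lambda}\bigl(W^*(T_j)'\cap\mathcal{M}_j\bigr),
\]
and the same bookkeeping (the special case $T=\bigoplus T_j$ being arbitrary) yields the standard identity $\mathcal{Z}(\mathcal{M})=\bigoplus_{j\in\Lambda}\mathcal{Z}(\mathcal{M}_j)$.

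Finally I would compare these two $\ell^\infty$-direct sums. Since the center is contained in every relative commutant, one has $\mathcal{Z}(\mathcal{M}_j)\subseteq W^*(T_j)'\cap\mathcal{M}_j$ for each $j$; therefore the displayed direct sums coincide if and only if they coincide in every coordinate, i.e.\ $W^*(T_j)'\cap\mathcal{M}_j=\mathcal{Z}(\mathcal{M}_j)$ for all $j\in\Lambda$. By \Cref{def-IR-opt} this says precisely that $T$ is irreducible in $\mathcal{M}$ if and only if each $T_j$ is irreducible in $\mathcal{M}_j$. I do not expect a genuine obstacle here: the only points requiring a little care are (i) not conflating $W^*(T)$ with $\bigoplus_j W^*(T_j)$, which is why passing to the generating set $\{T,T^*\}$ at the outset is essential, and (ii) checking that a bounded family of relative-commutant elements reassembles back inside $W^*(T)'\cap\mathcal{M}$, which is immediate from the $\ell^\infty$ structure of the direct sum.
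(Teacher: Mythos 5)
Your argument is correct and follows essentially the same route as the paper: both decompose the relative commutant coordinatewise to get $W^*(T)'\cap\mathcal{M}=\bigoplus_{j}\bigl(W^*(T_j)'\cap\mathcal{M}_j\bigr)$ and then compare with $\mathcal{Z}(\mathcal{M})=\bigoplus_{j}\mathcal{Z}(\mathcal{M}_j)$. The only cosmetic difference is that the paper tests commutation against projections $P=\bigoplus_j P_j$ while you test against arbitrary elements via the generating set $\{T,T^*\}$; both are valid.
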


\begin{proof}
Let $P=\bigoplus_{j\in\Lambda}P_j$ be a projection in $\mathcal{M}$.
Then $PT=TP$ if and only if $P_jT_j=T_jP_j$ for each $j\in\Lambda$.
Hence
\begin{equation*}
  W^*(T)'\cap\mathcal{M}=\bigoplus_{j\in\Lambda}W^*(T_j)'\cap\mathcal{M}_j.
\end{equation*}
Note that $\mathcal{Z}(\mathcal{M})=\bigoplus_{j\in\Lambda}\mathcal{Z}(\mathcal{M}_j)$.
This completes the proof.
\end{proof}

Recall that $\mathcal{Z}(E\mathcal{M}E)=\mathcal{Z}(\mathcal{M})E$ for every projection $E$ in $\mathcal{M}$ by \cite[Proposition 5.5.6]{KR1}.
We provide a useful criterion for the irreducibility of operators as follows.

\begin{lemma}\label{lem ETE-irreducible}
Let $A$ and $B$ be self-adjoint operators in a von Neumann algebra $\mathcal{M}$.
Suppose that $\{E_j\}_{j\in\Lambda}$ is a family of projections in $W^*(A)$ such that
\begin{enumerate}
\item [$(1)$] $I=\sum_{j\in\Lambda}E_j$;

\item [$(2)$] $C_{E_jBE_k}=C_{E_j}C_{E_k}$ for all $j\ne k$;

\item [$(3)$] $E_j(A+iB)E_j$ is irreducible in $E_j\mathcal{M}E_j$ for each $j\in\Lambda$.
\end{enumerate}
Then $A+iB$ is irreducible in $\mathcal{M}$.
\end{lemma}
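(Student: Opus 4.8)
The plan is to verify the defining equality $W^{*}(A+iB)'\cap\mathcal{M}=\mathcal{Z}(\mathcal{M})$ directly. The inclusion $\supseteq$ is automatic since $W^{*}(A+iB)\subseteq\mathcal{M}$, and because $W^{*}(A+iB)'\cap\mathcal{M}$ is a von Neumann algebra, hence the norm-closed linear span of its projections, it suffices to show that every projection $P$ in $W^{*}(A+iB)'\cap\mathcal{M}$ lies in $\mathcal{Z}(\mathcal{M})$. Put $T=A+iB$, so that $A=\tfrac12(T+T^{*})$ and $B=\tfrac1{2i}(T-T^{*})$ both lie in $W^{*}(T)$; since each $E_j$ lies in $W^{*}(A)\subseteq W^{*}(T)$, the projection $P$ commutes with $A$, with $B$, and with every $E_j$.

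First I would localize at each $E_j$. As $P$ commutes with $E_j$, the operator $PE_j=E_jPE_j$ is a projection in $E_j\mathcal{M}E_j$, and using $PE_j=E_jP$ together with $PT=TP$ one checks in a line that $PE_j$ commutes with $E_jTE_j=E_jAE_j+iE_jBE_j$, hence with its adjoint and so with $W^{*}(E_jTE_j)$. By condition~$(3)$, $E_j(A+iB)E_j$ is irreducible in $E_j\mathcal{M}E_j$, so, invoking the recalled identity $\mathcal{Z}(E_j\mathcal{M}E_j)=\mathcal{Z}(\mathcal{M})E_j$, we get $PE_j\in\mathcal{Z}(\mathcal{M})E_j$. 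Hence for each $j$ there is a central projection $Z_j$ in $\mathcal{M}$, which we may take to satisfy $Z_j\leqslant C_{E_j}$, with $PE_j=E_jP=Z_jE_j$.

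Next I would patch the $Z_j$ together using $B$ and condition~$(2)$. Fix $j\neq k$. Since $P$ commutes with $E_j$, $B$ and $E_k$, it commutes with $E_jBE_k$, whence
\[
Z_j(E_jBE_k)=(PE_j)BE_k=P(E_jBE_k)=(E_jBE_k)P=E_jB(E_kP)=Z_k(E_jBE_k),
\]
so $(Z_j-Z_k)E_jBE_k=0$. Left-multiplying by $Z_j-Z_k$ and using $(Z_j-Z_k)^2=Z_j+Z_k-2Z_jZ_k=:Z_j\triangle Z_k$ (a central projection) gives $(Z_j\triangle Z_k)E_jBE_k=0$, i.e.\ $Z_j\triangle Z_k\leqslant I-C_{E_jBE_k}$. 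By condition~$(2)$ this reads $(Z_j\triangle Z_k)C_{E_j}C_{E_k}=0$, and since $(Z_j-Z_k)C_{E_j}C_{E_k}$ is self-adjoint with square $(Z_j\triangle Z_k)C_{E_j}C_{E_k}=0$, we conclude $(Z_j-Z_k)C_{E_j}C_{E_k}=0$.

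Finally, for arbitrary $x\in\mathcal{M}$, condition~$(1)$ gives the strong-operator convergent expansion $x=\sum_{j,k}E_jxE_k$, and since $PE_j=Z_jE_j$ and $E_kP=Z_kE_k$ with $Z_j,Z_k$ central,
\[
Px-xP=\sum_{j,k}\bigl((PE_j)xE_k-E_jx(E_kP)\bigr)=\sum_{j\neq k}(Z_j-Z_k)E_jxE_k.
\]
Since $E_jxE_k=C_{E_j}C_{E_k}E_jxE_k$ and $(Z_j-Z_k)C_{E_j}C_{E_k}=0$, every summand vanishes, so $Px=xP$ and $P\in\mathcal{M}'\cap\mathcal{M}=\mathcal{Z}(\mathcal{M})$, completing the argument. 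I expect the main obstacle to be this last gluing step: conditions~$(1)$ and~$(2)$ are exactly what upgrades the relation ``$Z_j-Z_k$ annihilates the single element $E_jBE_k$'' to ``$Z_j-Z_k$ annihilates all of $E_j\mathcal{M}E_k$'', which is what forces the locally central projections $Z_j$ to assemble into one central projection; the two facts that make this go through are that $C_{E_j}C_{E_k}$ is a unit for $E_j\mathcal{M}E_k$ and that $C_{E_jBE_k}$ is taken as large as it can possibly be.
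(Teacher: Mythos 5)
Your proof is correct and follows essentially the same route as the paper's: localize $P$ to each corner via $PE_j=Z_jE_j$ using condition (3), use $P B=BP$ and condition (2) to force $(Z_j-Z_k)C_{E_j}C_{E_k}=0$, and then glue over all of $\mathcal{M}$ via condition (1). Your extra care in taking the $Z_j$ to be central projections to justify the passage from $(Z_j-Z_k)E_jBE_k=0$ to $(Z_j-Z_k)C_{E_j}C_{E_k}=0$ is a welcome refinement of a step the paper leaves implicit, but the argument is otherwise identical.
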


\begin{proof}
Let $P$ be a projection in $\mathcal{M}$ that commutes with $A+iB$.
Since $E_j\in W^*(A)$, we have $PE_j=E_jP$ for each $j\in\Lambda$.
It follows that $PE_j$ commutes with $E_j(A+iB)E_j$ and hence the condition $(3)$ implies that
\begin{equation*}
  PE_j\in\mathcal{Z}(E_j\mathcal{M}E_j)=\mathcal{Z}(\mathcal{M})E_j.
\end{equation*}
In particular, $PE_j=Z_jE_j$ for some $Z_j\in\mathcal{Z}(\mathcal{M})$.
Since $PB=BP$, it is clear that
\begin{equation*}
  Z_jE_jBE_k=E_jPBE_k=E_jBPE_k=E_jBE_kZ_k=Z_kE_jBE_k,
\end{equation*}
i.e., $(Z_j-Z_k)E_jBE_k=0$.
By the condition $(2)$, we have
\begin{equation}  \label{eq-ZCE}
    (Z_j-Z_k)C_{E_j}C_{E_k}=0\quad\text{for all}~j\ne k.
\end{equation}
Note that \eqref{eq-ZCE} holds trivially for $j=k$.
Let $T$ be an operator in $\mathcal{M}$.
Then
\begin{equation*}
  (Z_j-Z_k)E_jTE_k=(Z_j-Z_k)C_{E_j}C_{E_k}E_jTE_k=0
  \quad\text{for all}~j,k\in\Lambda.
\end{equation*}
It follows that $PE_jTE_k=Z_jE_jTE_k=E_jTE_kZ_k=E_jTE_kP.$
% \begin{equation*}
%   PE_jTE_k=Z_jE_jTE_k=E_jTE_kZ_k=E_jTE_kP.
% \end{equation*}
Thus, the condition $(1)$ yields $PT=TP$.
Therefore, $P\in\mathcal{Z}(\mathcal{M})$.
\end{proof}

If $\mathcal{N}$ is a von Neumann algebra with an irreducible operator and $n\geqslant 2$, then there exists an irreducible operator in $M_n(\mathbb{C})\otimes\mathcal{N}$ with a nice property.

\begin{lemma}\label{lem P+iB}
Suppose that $\mathcal{N}$ is a von Neumann algebra, $n\geqslant 2$, and $\mathcal{M}=M_n(\mathbb{C})\otimes\mathcal{N}$.
If there exists an irreducible operator in $\mathcal{N}$, then there exists an irreducible operator $P+iB$ in $\mathcal{M}$, where $P$ is a projection and $B$ is a self-adjoint operator.
\end{lemma}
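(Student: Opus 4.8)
The plan is to exhibit the operator $P+iB$ explicitly as an ``arrow plus Jacobi'' matrix over $\mathcal{N}$ whose data comes from an irreducible operator of $\mathcal{N}$. Identify $\mathcal{M}=M_n(\mathbb{C})\otimes\mathcal{N}$ with the $n\times n$ matrices over $\mathcal{N}$ and write $E_{ij}$ for the matrix units. Fix an irreducible operator $S\in\mathcal{N}$ and put $H_1=\operatorname{Re}(S)$, $H_2=\operatorname{Im}(S)$, so that $H_1,H_2$ are self-adjoint with $W^*(H_1,H_2)=W^*(S)$; choose $\lambda>\|H_1\|$ and set $r=H_1+\lambda I$, a positive invertible element of $\mathcal{N}$ with $W^*(r,H_2)=W^*(S)$. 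Define
\begin{equation*}
P=E_{11}\otimes I,\qquad
B=E_{11}\otimes H_2\;+\;(E_{12}+E_{21})\otimes r\;+\;\sum_{k=2}^{n-1}(E_{k,k+1}+E_{k+1,k})\otimes I,
\end{equation*}
where the last sum is empty when $n=2$. Then $P$ is a projection and $B=B^{*}$, and since $P$ and $B$ are respectively the real and imaginary parts of $T:=P+iB$, one has $W^*(T)'\cap\mathcal{M}=\{P,B\}'\cap\mathcal{M}$. So it suffices to show that every $X\in\mathcal{M}$ commuting with both $P$ and $B$ lies in $\mathcal{Z}(\mathcal{M})=\mathbb{C}I_n\otimes\mathcal{Z}(\mathcal{N})$.

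Write $X=(x_{ij})$ with $x_{ij}\in\mathcal{N}$. Commuting with $P=E_{11}\otimes I$ forces $x_{1j}=x_{i1}=0$ for $i,j\neq1$, so $X=x_{11}\oplus X'$ with $X'\in M_{n-1}(\mathcal{N})$. Next I would read off the entries of $XB=BX$: the $(1,1)$-entry gives $x_{11}H_2=H_2x_{11}$; the $(1,2)$- and $(2,1)$-entries give $x_{11}r=rx_{22}$ and $x_{22}r=rx_{11}$, whence $x_{11}r^2=r^2x_{11}$, so $x_{11}r=rx_{11}$ by continuous functional calculus (as $r>0$) and then $x_{22}=x_{11}$; and the $(1,j)$- and $(j,1)$-entries for $j\geq3$ give, using invertibility of $r$, that $x_{2j}=x_{j2}=0$. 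For $n\geq3$ one then propagates down the scalar tridiagonal tail by induction on $k=2,\dots,n-1$: using $B_{k,k+1}=B_{k+1,k}=I$ together with the vanishing already in hand, the $(k,j)$-entries of $XB=BX$ force $x_{k+1,k+1}=x_{11}$ and $x_{k+1,j}=x_{j,k+1}=0$ for $j\neq k+1$, and a final look at the $(n-1,n)$-entry gives $x_{nn}=x_{11}$. Hence $X=I_n\otimes x_{11}$ where $x_{11}$ commutes with both $r$ and $H_2$, so $x_{11}\in W^*(r,H_2)'\cap\mathcal{N}=W^*(S)'\cap\mathcal{N}=\mathcal{Z}(\mathcal{N})$; therefore $X\in\mathcal{Z}(\mathcal{M})$, and $T=P+iB$ is irreducible.

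The conceptual obstacle is that the real part is required to be a \emph{projection}, which prevents a direct appeal to the criterion in \Cref{lem ETE-irreducible}: $W^*(P)$ has only the four projections $0,P,I-P,I$, and the compression $P(P+iB)P=I+i\,PBP$ on $P\mathcal{M}P\cong\mathcal{N}$ is a self-adjoint perturbation of the identity, hence never irreducible unless $\mathcal{N}$ is abelian. So the argument has to be a direct matrix computation, and the design rests on two points: an invertible off-diagonal coupling $r$ between the distinguished corner and the rest, rigid enough both to force $x_{22}=x_{11}$ and to transport commutation with $H_1=\operatorname{Re}(S)$ onto $x_{11}$; and a scalar Jacobi chain filling the remaining couplings so that all other diagonal entries of $X$ get pinned to $x_{11}$, while the single extra self-adjoint slot $E_{11}\otimes H_2$ records $\operatorname{Im}(S)$. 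The only real labour is the bookkeeping of that induction, and one should check the degenerate case $n=2$ (empty tail, so only the $2\times2$ corner computation) separately.
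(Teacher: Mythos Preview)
Your proof is correct, and it takes a genuinely different route from the paper's. The paper builds the projection $P$ as a rank-one-type matrix with entries $I-a$, $\sqrt{(a-a^2)/(n-1)}$, $a/(n-1)$ coming from the real part $a$ of the given irreducible operator (after shifting so that $\tfrac{3}{5}I\le a,b\le\tfrac{4}{5}I$), and takes $B$ to be \emph{diagonal} with entries $b,\tfrac12 I,\ldots,\tfrac1n I$. It then shows, using Borel functional calculus on $B$ to pick off the diagonal matrix units and invertibility of $\sqrt{(a-a^2)/(n-1)}$ and $a/(n-1)$ to recover the off-diagonal ones, that $W^*(P+iB)=M_n(\mathbb{C})\otimes W^*(a+ib)$, whence irreducibility. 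You instead take the simplest possible projection $P=E_{11}\otimes I$ and push all the work into a tridiagonal $B$, then argue by a direct commutant computation with an inductive walk down the Jacobi tail. Your construction is arguably more transparent (the projection is trivial and the induction is routine), while the paper's buys the stronger conclusion that $W^*(P+iB)$ is exactly $M_n(\mathbb{C})\otimes W^*(a+ib)$, not merely that its relative commutant is central. Your remark about why \Cref{lem ETE-irreducible} does not apply here is apt and explains why either proof must proceed by hand.
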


\begin{proof}
Let $a+ib$ be an irreducible operator in $\mathcal{N}$, where $a$ and $b$ are self-adjoint.
Without loss of generality, we may assume that $\frac{3}{5}I\leqslant a,b\leqslant\frac{4}{5}I$.
Let $\{e_{jk}\}_{j,k=1}^n$ be a system of matrix units in $M_n(\mathbb{C})$ and we define two operators in $\mathcal{M}$ by
\begin{equation*}
  P=
  \begin{pmatrix}
    I-a & \sqrt{\frac{a-a^2}{n-1}} & \cdots & \sqrt{\frac{a-a^2}{n-1}} \\
    \sqrt{\frac{a-a^2}{n-1}} & \frac{a}{n-1} & \cdots & \frac{a}{n-1} \\
    \vdots & \vdots & \ddots & \vdots \\
    \sqrt{\frac{a-a^2}{n-1}} & \frac{a}{n-1} & \cdots & \frac{a}{n-1}
  \end{pmatrix}\quad\text{and}\quad
  B=
  \begin{pmatrix}
    b & & & &\\
    & \frac{1}{2}I & & & \\
    & & \frac{1}{3}I & & \\
    & & & \ddots & \\
    & & & & \frac{1}{n}I
  \end{pmatrix}.
\end{equation*}
It is clear that $P$ is a projection and $B$ is self-adjoint.
By Borel function calculus, we have $e_{jj}\otimes I\in W^*(B)$ for $1\leqslant j\leqslant n$.
By considering $(e_{jj}\otimes I)P(e_{j+1,j+1}\otimes I)$, the operators
\begin{equation*}
  e_{12}\otimes\sqrt{\frac{a-a^2}{n-1}}\quad\text{and}\quad e_{j,j+1}\otimes\frac{a}{n-1}\quad (2\leqslant j\leqslant n-1)
\end{equation*}
are all in $W^*(P+iB)$.
Since $\frac{3}{5}I\leqslant a\leqslant\frac{4}{5}I$, both $\sqrt{\frac{a-a^2}{n-1}}$ and $\frac{a}{n-1}$ are invertible operators in $\mathcal{N}$.
It follows that $e_{j,j+1}\otimes I\in W^*(P+iB)$ for $1\leqslant j\leqslant n-1$ and hence
\begin{equation*}
  \{e_{jk}\otimes I\}_{j,k=1}^n\subseteq W^*(P+iB).
\end{equation*}
From this, we obtain that $W^*(P+iB)=M_n(\mathbb{C})\otimes W^*(a+ib)$.
Therefore, $P+iB$ is an irreducible operator in $\mathcal{M}$.
\end{proof}

Suppose that $\mathcal{P}$ is a separable hyperfinite type $\mathrm{II}_1$ von Neumann algebra.
Then $\mathcal{P}$ can be decomposed as a tensor product $\mathcal{P}\cong\mathcal{R}\,\overline{\otimes}\,\mathcal{A}$ by \cite{Con}, where $\mathcal{R}$ is the hyperfinite type $\mathrm{II}_1$ factor and $\mathcal{A}$ is an abelian von Neumann algebra.
By \cite[Corollary 5.15]{She}, we assume that $\mathcal{R}$ is generated by two self-adjoint operators $A$ and $B$.
By \cite[Theorem 7.12]{RR73}, the abelian von Neumann algebra $W^*(\{A\otimes I\}\cup(I\otimes\mathcal{A}))$ is generated by a self-adjoint operator $A_1$.
Let $B_1=B\otimes I$.
It is clear that $\mathcal{P}$ is generated by the operator $A_1+iB_1$.
This fact will be directly applied in the proof of the following proposition, which is a comparable version of the lemma in \cite{FT}.

\begin{proposition}\label{prop P+iB}
Let $\mathcal{M}$ be a separable von Neumann algebra.
Then there exists an irreducible operator $P+iB$ in $\mathcal{M}$, where $P$ is a projection and $B$ is a self-adjoint operator.
\end{proposition}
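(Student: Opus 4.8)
The plan is to use the type decomposition of $\mathcal{M}$ and to build the operator one summand at a time, reassembling by means of \Cref{lem direct-sum}: if $\mathcal{M}=\bigoplus_{j}\mathcal{M}_j$ and each $\mathcal{M}_j$ contains an irreducible operator $P_j+iB_j$ with $P_j$ a projection and $B_j$ self-adjoint, then $\big(\bigoplus_j P_j\big)+i\big(\bigoplus_j B_j\big)$ is an irreducible operator in $\mathcal{M}$ of the required form. Decompose $\mathcal{M}$ into its type $\mathrm{I}_1$, type $\mathrm{I}_n$ $(2\le n<\infty)$, type $\mathrm{I}_\infty$, type $\mathrm{II}_1$, type $\mathrm{II}_\infty$, and type $\mathrm{III}$ parts (any of them possibly $0$). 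The first two families are immediate: on the type $\mathrm{I}_1$ part $\mathcal{M}$ is abelian, so $0=0+i\,0$ is irreducible there; on a type $\mathrm{I}_n$ part, which is $M_n(\mathbb{C})\otimes\mathcal{A}$ with $\mathcal{A}$ abelian, every operator of $\mathcal{A}$ is irreducible, so \Cref{lem P+iB} furnishes an irreducible operator of the form $P+iB$. It remains to treat the properly infinite parts and the type $\mathrm{II}_1$ part.

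For a properly infinite summand $\mathcal{N}$ (this covers types $\mathrm{I}_\infty$, $\mathrm{II}_\infty$, and $\mathrm{III}$), I would use the classical fact that a separable properly infinite von Neumann algebra is singly generated; in particular $\mathcal{N}$ contains an irreducible operator, namely any generator. Being properly infinite, $\mathcal{N}$ has orthogonal projections $E_1,E_2$ with $E_1+E_2=I$ and $E_1\sim E_2\sim I$, whence $\mathcal{N}\cong M_2(\mathbb{C})\otimes E_1\mathcal{N}E_1\cong M_2(\mathbb{C})\otimes\mathcal{N}$; applying \Cref{lem P+iB} to $M_2(\mathbb{C})\otimes\mathcal{N}$ produces an irreducible operator of the form $P+iB$ in $M_2(\mathbb{C})\otimes\mathcal{N}\cong\mathcal{N}$.

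The type $\mathrm{II}_1$ part $\mathcal{N}$ is the heart of the argument. By Sinclair--Smith \cite[Theorem~8]{SS} there is a separable hyperfinite type $\mathrm{II}_1$ von Neumann subalgebra $\mathcal{P}\subseteq\mathcal{N}$ with $\mathcal{P}'\cap\mathcal{N}=\mathcal{Z}(\mathcal{N})$. By Connes \cite{Con}, $\mathcal{P}\cong\mathcal{R}\,\overline{\otimes}\,\mathcal{A}$ with $\mathcal{A}=\mathcal{Z}(\mathcal{P})$ abelian, and, as recalled just before this proposition, $\mathcal{P}$ is generated by a single operator $A_1+iB_1$ with $A_1,B_1$ self-adjoint. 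Since $M_2(\mathbb{C})\otimes\mathcal{R}\cong\mathcal{R}$, we have $\mathcal{P}\cong M_2(\mathbb{C})\otimes\mathcal{P}$. I would then re-run the construction in the proof of \Cref{lem P+iB} with $M_2(\mathbb{C})\otimes\mathcal{P}$ in place of $\mathcal{M}$ and with the generator $A_1+iB_1$ of $\mathcal{P}$ playing the role of the irreducible operator ``$a+ib$'' there (after the harmless affine rescaling that puts its real and imaginary parts between $\frac{3}{5}I$ and $\frac{4}{5}I$ while leaving $W^*(A_1+iB_1)=\mathcal{P}$ unchanged): that proof produces a projection $P$ and a self-adjoint operator $B$ in $M_2(\mathbb{C})\otimes\mathcal{P}$ with
\begin{equation*}
  W^*(P+iB)=M_2(\mathbb{C})\otimes W^*(A_1+iB_1)=M_2(\mathbb{C})\otimes\mathcal{P}.
\end{equation*}
Transporting through an isomorphism $M_2(\mathbb{C})\otimes\mathcal{P}\cong\mathcal{P}$ gives a projection $P_0$ and a self-adjoint $B_0$ in $\mathcal{P}$ with $W^*(P_0+iB_0)=\mathcal{P}$, and therefore $W^*(P_0+iB_0)'\cap\mathcal{N}=\mathcal{P}'\cap\mathcal{N}=\mathcal{Z}(\mathcal{N})$, so $P_0+iB_0$ is an irreducible operator of the desired form in $\mathcal{N}$.

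I expect the main obstacle to be the type $\mathrm{II}_1$ case, and within it the point that one needs $W^*(P_0+iB_0)$ to be all of $\mathcal{P}$ — not merely a subalgebra with trivial relative commutant in $\mathcal{P}$ — so that the relative commutant in the ambient algebra $\mathcal{N}$ collapses to $\mathcal{Z}(\mathcal{N})$. This is why one must extract from the proof of \Cref{lem P+iB} the sharper conclusion $W^*(P+iB)=M_n(\mathbb{C})\otimes W^*(a+ib)$ and feed it a single generator of the hyperfinite subalgebra produced by Sinclair--Smith together with Connes' structure theorem for injective type $\mathrm{II}_1$ algebras. A further, more routine, input is the classical single-generation theorem for separable properly infinite von Neumann algebras used in the infinite cases.
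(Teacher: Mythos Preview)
Your proposal is correct and follows the same overall architecture as the paper: type decomposition, \Cref{lem direct-sum} to reassemble, \Cref{lem P+iB} together with an $M_2$-splitting for the non-abelian pieces, Wogen's single-generation theorem for the properly infinite case, and Sinclair--Smith for the type~$\mathrm{II}_1$ case.

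The one substantive difference is where you peel off the $M_2$ factor in the type~$\mathrm{II}_1$ case. The paper writes the type~$\mathrm{II}_1$ summand itself as $M_2(\mathbb{C})\otimes\mathcal{N}$ (via \cite[Lemma~6.5.6]{KR2}), finds an irreducible operator in $\mathcal{N}$ from a generator of the Sinclair--Smith hyperfinite subalgebra, and then invokes \Cref{lem P+iB} as a black box. You instead keep the ambient type~$\mathrm{II}_1$ algebra intact, pass to the hyperfinite $\mathcal{P}$, use $\mathcal{P}\cong M_2(\mathbb{C})\otimes\mathcal{P}$, and then need the sharper conclusion $W^*(P+iB)=M_2(\mathbb{C})\otimes W^*(a+ib)$ from the \emph{proof} of \Cref{lem P+iB} so that the resulting operator actually generates $\mathcal{P}$ (mere irreducibility in $\mathcal{P}$ would not control the relative commutant in the larger algebra). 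Both routes work; the paper's is slightly more economical because it avoids reopening \Cref{lem P+iB}. Incidentally, your choice of $0+i\,0$ on the abelian part is fine (every operator in an abelian algebra is irreducible in the sense of \Cref{def-IR-opt}); the paper instead takes $P=0$ and $B$ a self-adjoint generator, but that extra strength is not needed here.
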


\begin{proof}
According to \cite[Theorem 7.12]{RR73}, every separable abelian von Neumann algebra is generated by a self-adjoint operator.
If $\mathcal{M}$ is abelian, then we can choose $P+iB$ with $P=0$ and $W^*(B)=\mathcal{M}$.
If $\mathcal{M}$ is of type $\mathrm{I}_n$ for some integer $n\geqslant 2$, then we can write that $\mathcal{M}\cong M_n(\mathbb{C})\otimes\mathcal{Z}(\mathcal{M})$ by \cite[Theorem 6.6.5]{KR2} and the conclusion follows from \Cref{lem P+iB}.

If $\mathcal{M}$ is either properly infinite or of type $\mathrm{II}_1$, then $\mathcal{M}\cong M_2(\mathbb{C})\otimes\mathcal{N}$ for some von Neumann algebra $\mathcal{N}$ by \cite[Lemma 6.3.3, Lemma 6.5.6]{KR2}.
If $\mathcal{N}$ is properly infinite, then $\mathcal{N}$ has a generator by \cite{Wog}, which is clearly irreducible in $\mathcal{N}$.
If $\mathcal{N}$ is of type $\mathrm{II}_1$, then there exists a hyperfinite type $\mathrm{II}_1$ von Neumann subalgebra $\mathcal{P}$ of $\mathcal{N}$ such that $\mathcal{P}'\cap\mathcal{N}=\mathcal{Z}(\mathcal{N})$ by \cite[Theorem 8]{SS}.
Hence every generator of $\mathcal{P}$ is an irreducible operator in $\mathcal{N}$ and we complete the proof by \Cref{lem P+iB}.
The general case follows from \Cref{lem direct-sum} and the type decomposition theorem % for von Neumann algebras
\cite[Theorem 6.5.2]{KR2}.
\end{proof}

\subsection{Proof of \texorpdfstring{\Cref{thm norm-dense}}{theorem ref}}\label{subsec norm-dense}

Let $\mathcal{I}$ be a (two-sided) norm-closed ideal in a von Neumann algebra $\mathcal{M}$.
We show that certain operators in $\mathcal{M}$ can be perturbed to be irreducible operators with respect to $\mathcal{I}$ in the following lemma.

\begin{lemma}\label{lem diagonal}
Let $\mathcal{M}$ be a separable von Neumann algebra and $\mathcal{I}$ a norm-closed ideal in $\mathcal{M}$.
Suppose that $\{E_j\}_{j=1}^{\infty}$ is a sequence of projections in $\mathcal{I}$ such that $I=\sum_{j=1}^{\infty}E_j$.
Let $A$ and $B$ be self-adjoint operators in $\mathcal{M}$ with $A=\sum_{j=1}^{\infty}\alpha_jE_j$, where $\{\alpha_j\}_{j=1}^{\infty}$ is a sequence of real numbers.
Then for any $\varepsilon>0$, there exists an operator $K\in\mathcal{I}$ with $\|K\|<\varepsilon$ such that $(A+iB)+K$ is irreducible in $\mathcal{M}$.
\end{lemma}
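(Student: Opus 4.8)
The plan is to produce $K\in\mathcal{I}$ of the form $K=K_1+iK_2$ with $K_1,K_2$ self-adjoint, and then, writing $A'=A+K_1$ and $B'=B+K_2$, to verify the three hypotheses of \Cref{lem ETE-irreducible} for the pair $(A',B')$ and the given family $\{E_j\}_{j=1}^{\infty}$; that lemma then shows $(A+iB)+K=A'+iB'$ is irreducible. Two elementary facts will be used freely: a norm-closed two-sided ideal of a C$^*$-algebra is self-adjoint and hereditary for subprojections, so $F\mathcal{M}G\subseteq\mathcal{I}$ whenever $F$ or $G$ lies below some $E_j$; and $\sum_jE_j=I$ forces the $E_j$ to be pairwise orthogonal, so for $X_j\in E_j\mathcal{M}E_j$ one has $\bigl\|\sum_{j>N}X_j\bigr\|=\sup_{j>N}\|X_j\|$, which makes the block sums below converge in norm to elements of $\mathcal{I}$.

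First I would fix the real part: choosing a summable sequence $\{\varepsilon_j\}$ of positive reals with $\sup_j\varepsilon_j$ small, select for each $j$ a self-adjoint $a_j\in E_j\mathcal{M}E_j$ with $\|a_j-\alpha_jE_j\|<\varepsilon_j$ and with the spectra $\sigma(a_j)$ pairwise disjoint (the precise choice inside the small window around $\alpha_j$ is coordinated with the diagonal of $B'$ below). Then $A':=\sum_ja_j=A+K_1$ with $K_1:=\sum_j(a_j-\alpha_jE_j)\in\mathcal{I}$ of small norm, and since $\chi_{\sigma(a_j)}(A')=E_j$ (the remaining blocks contributing $0$ by disjointness of the spectra), each $E_j\in W^*(A')$, which is hypothesis $(1)$. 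Next I would fix the off-diagonal part of the imaginary part: for $j<k$ apply \Cref{lem CY} to $E_jBE_k\in E_j\mathcal{M}E_k$ with a summably small tolerance $\eta_{jk}$ to get $Y_{jk}\in E_j\mathcal{M}E_k$ with $C_{Y_{jk}}=C_{E_j}C_{E_k}$ and $\|Y_{jk}-E_jBE_k\|<\eta_{jk}$; then $K_2^{\mathrm{off}}:=\sum_{j<k}\bigl((Y_{jk}-E_jBE_k)+(Y_{jk}-E_jBE_k)^*\bigr)\in\mathcal{I}$ has small norm, and for $B+K_2^{\mathrm{off}}$ the $(j,k)$-block is $Y_{jk}$ for $j<k$, so taking adjoints and using $C_{X^*}=C_X$ gives hypothesis $(2)$. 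The block-diagonal correction made next will not disturb this.

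The heart is hypothesis $(3)$. Put $b_j:=E_jBE_j$ and $\mathcal{N}_j:=E_j\mathcal{M}E_j$, a separable von Neumann algebra with $\mathcal{Z}(\mathcal{N}_j)=\mathcal{Z}(\mathcal{M})E_j$; since $W^*(x+iy)=W^*(\{x,y\})$ for self-adjoint $x,y$, hypothesis $(3)$ for block $j$ says $W^*(\{a_j,b_j'\})'\cap\mathcal{N}_j=\mathcal{Z}(\mathcal{N}_j)$ for a self-adjoint $b_j'$ near $b_j$. I would reduce this to the claim: for each $\delta>0$ there are a self-adjoint $b_j'\in\mathcal{N}_j$ with $\|b_j'-b_j\|<\delta$ and some self-adjoint $c_j\in\mathcal{N}_j$ with $W^*(\{c_j,b_j'\})'\cap\mathcal{N}_j=\mathcal{Z}(\mathcal{N}_j)$. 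Granting the claim (applied with $j$-dependent tolerances $\delta_j\to0$), put $a_j:=(\alpha_j+\rho_j)E_j+\lambda_jc_j$ for a tiny slope $\lambda_j>0$ and tiny shift $\rho_j$ so that the $\sigma(a_j)$ lie in pairwise disjoint intervals about the $\alpha_j$ and $\|a_j-\alpha_jE_j\|<\varepsilon_j$; since $a_j$ is an affine function of $c_j$ with nonzero slope, $W^*(\{a_j,b_j'\})=W^*(\{c_j,b_j'\})$, so $(3)$ holds. With $K_2:=K_2^{\mathrm{off}}+\sum_j(b_j'-b_j)$ (the diagonal corrections lying in $\mathcal{N}_j\subseteq\mathcal{I}$) and $K:=K_1+iK_2$, a small enough choice of parameters gives $K\in\mathcal{I}$, $\|K\|<\varepsilon$, and \Cref{lem ETE-irreducible} concludes.

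It remains to prove the Step-$(3)$ claim, which I expect to be the main obstacle. It is trivial if $\mathcal{N}_j$ is abelian ($b_j'=b_j$, $c_j=0$). If $\mathcal{N}_j$ is of type $\mathrm{I}$ one argues directly: by \Cref{lem direct-sum} reduce to a fixed type $\mathrm{I}_n$ and the picture $\mathcal{N}_j\cong M_n(\mathbb{C})\otimes\mathcal{Z}(\mathcal{N}_j)$, take $c_j=\sum_k e_{kk}\otimes d_k$ with each $d_k$ generating $\mathcal{Z}(\mathcal{N}_j)$ and the $\sigma(d_k)$ pairwise disjoint, and perturb $b_j$ via \Cref{lem CY} so that its $(k,k+1)$ matrix blocks have full central support; a short computation (as in \Cref{lem P+iB}) then shows that any $x\in\mathcal{N}_j$ commuting with $c_j$ and $b_j'$ is block-diagonal with all diagonal entries equal, hence central. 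The hard case is $\mathcal{N}_j$ of type $\mathrm{II}$ or $\mathrm{III}$, where the corners are never abelian and the real part $c_j$ alone cannot generate them. Here I would disintegrate $\mathcal{N}_j$ over its centre into separable factors $\mathcal{N}_j=\int^{\oplus}(\mathcal{N}_j)_\omega\,d\nu(\omega)$, apply the norm-density of irreducible operators in separable factors \cite{WFS} fibrewise to perturb $b_j$ so that $c_\omega+ib_\omega'$ becomes irreducible in $(\mathcal{N}_j)_\omega$ for $\nu$-a.e.\ $\omega$, and reassemble, using that irreducibility of an operator in $\mathcal{N}_j$ is equivalent to fibrewise irreducibility $\nu$-almost everywhere. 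The delicate content---and the real obstacle---is the measurable selection of these fibrewise perturbations with a uniform norm bound; alternatively one can try to adapt the Sinclair--Smith-type construction of an injective subalgebra with trivial relative commutant, as in the proof of \Cref{prop P+iB}, directly, which is presumably the ``new method'' alluded to. Everything else---Steps $(1)$ and $(2)$, the orthogonality estimates, and the affine-rescaling trick---is routine.
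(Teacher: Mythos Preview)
Your overall architecture---verify the three hypotheses of \Cref{lem ETE-irreducible} after perturbing the real and imaginary parts separately---is exactly the paper's, and your treatment of hypotheses $(1)$ and $(2)$ (disjoint spectral windows for the real part, \Cref{lem CY} on the off-diagonal blocks of the imaginary part) matches the paper's proof. The divergence, and the genuine gap, is in hypothesis $(3)$.

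Your Step-$(3)$ claim asks for, in each corner $\mathcal{N}_j=E_j\mathcal{M}E_j$, a self-adjoint $c_j$ and a perturbation $b_j'$ of the \emph{given} diagonal block $b_j=E_jBE_j$ with $c_j+ib_j'$ irreducible. For type $\mathrm{II}$ or $\mathrm{III}$ corners you propose to disintegrate over the centre and invoke \cite{WFS} fibrewise, but you yourself flag the measurable-selection problem, and you do not resolve it; the alternative of ``adapting Sinclair--Smith as in \Cref{prop P+iB}'' does not help either, since \Cref{prop P+iB} only produces an irreducible operator with \emph{small} imaginary part, not one whose imaginary part is close to a \emph{prescribed} $b_j$. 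So as written the argument is incomplete at precisely the point you identify as the obstacle.

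The paper closes this gap by a simple refinement step that you are missing: before doing anything else, approximate each $E_jBE_j$ spectrally by a finite sum $\sum_k\lambda_{jk}F_{jk}$ with $E_j=\sum_kF_{jk}$, absorb the (block-diagonal, hence $\mathcal{I}$-valued and small) error into $K$, and then work with the refined family $\{F_{jk}\}$ in place of $\{E_j\}$. After this refinement the diagonal block of $B$ on each $F_{jk}$ is the \emph{scalar} $\lambda_{jk}F_{jk}$, so for hypothesis $(3)$ one only needs an irreducible operator in $F_{jk}\mathcal{M}F_{jk}$ whose imaginary part is close to a scalar---equivalently, after subtracting that scalar, an irreducible operator with arbitrarily small imaginary part. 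That is exactly what \Cref{prop P+iB} supplies (an irreducible $P+iH$ with $P$ a projection, hence $0\le P\le I$, and $\|H\|$ as small as one likes), and your affine-rescaling trick then places $\beta_{jk}F_{jk}+\varepsilon_{jk}P$ into disjoint spectral windows. No direct integrals, no measurable selection, no fibrewise appeal to \cite{WFS}. In short: your claim is true, but the cleanest proof of it \emph{is} the refinement-plus-\Cref{prop P+iB} argument, and once you see that, it is more efficient to refine first and apply \Cref{lem ETE-irreducible} to the $F_{jk}$ directly, as the paper does.
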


\begin{proof}
By spectral theory, for each $j\geqslant 1$, there are projections $\{F_{jk}\}_{k=1}^{n_j}$ and real numbers $\{\lambda_{jk}\}_{k=1}^{n_j}$ such that
\begin{equation*}
  E_j=\sum_{k=1}^{n_j}F_{jk}\quad\text{and}\quad
  \left\|E_jBE_j-\sum_{k=1}^{n_j}\lambda_{jk}F_{jk}\right\|
  <\frac{\varepsilon}{2^{j+1}}.
\end{equation*}
Let $T_j=E_jBE_j-\sum_{k=1}^{n_j}\lambda_{jk}F_{jk}\in E_j\mathcal{M}E_j\subseteq\mathcal{I}$ and $T=\sum_{j=1}^{\infty}T_j$.
It is clear that $T\in\mathcal{I}$ and $\|T\|<\frac{\varepsilon}{2}$.
Let $B_0=B-T$.
Then $F_{jk}B_0F_{jk}=\lambda_{jk}F_{jk}$.

For the sake of simplicity, we rename $\{F_{jk}\}_{1\leqslant k\leqslant n_j,j\geqslant 1}$ as $\{F_j\}_{j=1}^{\infty}$.
Without loss of generality, we assume that $A=\sum_{j=1}^{\infty}\alpha_jF_j$
and $F_jB_0F_j=\lambda_jF_j$ for each $j\geqslant 1$.
By \Cref{prop P+iB}, there exists an irreducible operator $P_j+iH_j$ in $F_j\mathcal{M}F_j$, where $P_j$ is a projection and $H_j$ is a self-adjoint operator with $\|H_j\|<\frac{\varepsilon}{2^{j+3}}$ for each $j\geqslant 1$.
There are two sequences $\{\beta_j\}_{j=1}^\infty$ of real numbers and $\{\varepsilon_j\}_{j=1}^\infty$ of positive numbers such that
\begin{equation*}
  |\alpha_j-\beta_j|<\frac{\varepsilon}{2^{j+3}},\quad 0<\varepsilon_j<\frac{\varepsilon}{2^{j+3}}\quad\text{for all}~j\geqslant 1,
\end{equation*}
and $\{\beta_j,\beta_j+\varepsilon_j\}\cap\{\beta_k,\beta_k+\varepsilon_k\}=\emptyset$ for all $j\ne k$.
By \Cref{lem CY}, there exists an operator $Y_{jk}\in F_j\mathcal{M}F_k$ such that $\|F_jB_0F_k-Y_{jk}\|<\frac{\varepsilon}{2^{j+k+3}}$ and $C_{Y_{jk}}=C_{F_j}C_{F_k}$ for all $j<k$.
Let $Y_{jk}=Y_{kj}^*$ for all $j>k$.
Now we define
\begin{equation*}
  A_1=\sum_{j=1}^{\infty}(\beta_jF_j+\varepsilon_jP_j)\quad\text{and}\quad
  B_1=\sum_{j=1}^{\infty}(\lambda_jF_j+H_j)+\sum_{j\ne k}Y_{jk}.
\end{equation*}
Then $A_1-A, B_1-B_0\in\mathcal{I}$ and $\|A_1-A\|,\|B_1-B_0\|<\frac{\varepsilon}{4}$.
It follows that
\begin{equation*}
  (A_1+iB_1)-(A+iB)\in\mathcal{I}\quad\text{and}\quad
  \|(A_1+iB_1)-(A+iB)\|<\varepsilon.
\end{equation*}
By Borel function calculus, we have $F_j\in W^*(A_1)$ for each $j\geqslant 1$.
By the definition of $P_j+iH_j$, the operator $F_j(A_1+iB_1)F_j$ is irreducible in $F_j\mathcal{M}F_j$ for each $j\geqslant 1$.
Moreover, the central support of $F_jB_1F_k=Y_{jk}$ is $C_{F_j}C_{F_k}$ for all $j\ne k$.
Therefore, $A_1+iB_1$ is irreducible in $\mathcal{M}$ by \Cref{lem ETE-irreducible}.
\end{proof}

We present the proof of \Cref{thm norm-dense} by using \cite[Theorem 4.8]{AP} and \Cref{lem diagonal}.

\begin{proof}[Proof of Theorem {\rm\ref{thm norm-dense}}]
By \cite[Theorem 4.8]{AP}, there exists a self-adjoint operator $K_1\in\mathcal{I}$ with $\|K_1\|<\frac{\varepsilon}{2}$ such that $\mathrm{Re}(T)+K_1=\sum_{j=1}^{\infty}\alpha_jE_j$, where $\{E_j\}_{j=1}^{\infty}$ are projections in $\mathcal{I}$ with sum $I$ and $\{\alpha_j\}_{j=1}^{\infty}$ are real numbers.
By \Cref{lem diagonal}, there exists an operator $K_2\in\mathcal{I}$ with $\|K_2\|<\frac{\varepsilon}{2}$ such that $T+K_1+K_2$ is irreducible in $\mathcal{M}$.
We complete the proof by taking $K=K_1+K_2$.
\end{proof}

The following corollary is a direct consequence of \Cref{thm norm-dense}.

\begin{corollary}\label{cor norm-dense}
The set of irreducible operators in every separable von Neumann algebra is a norm-dense $G_\delta$ set.
\end{corollary}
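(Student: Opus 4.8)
The plan is to combine \Cref{thm norm-dense} (which supplies norm-density) with a Halmos-type argument in the spirit of \cite{Hal,RR} (which supplies the $G_\delta$ property). Norm-density is immediate: $\mathcal{M}$ is itself a weak-operator dense norm-closed ideal in $\mathcal{M}$, so \Cref{thm norm-dense} applied with $\mathcal{I}=\mathcal{M}$ gives, for every $T\in\mathcal{M}$ and $\varepsilon>0$, an element $K\in\mathcal{M}$ with $\|K\|<\varepsilon$ such that $T+K$ is irreducible; hence $\operatorname{IR}(\mathcal{M})$ is norm-dense in $\mathcal{M}$.

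For the $G_\delta$ property I would show that the set $\operatorname{Red}(\mathcal{M}):=\mathcal{M}\setminus\operatorname{IR}(\mathcal{M})$ of reducible operators is a norm-$F_\sigma$ subset of $\mathcal{M}$. Since $\mathcal{M}$ is separable, fix a faithful normal state and let $d$ be the associated $\sigma$-strong-$*$ metric on the unit ball $(\mathcal{M})_1$; because the predual is separable, $((\mathcal{M})_1,d)$ is a separable metric space on which multiplication is jointly continuous on bounded sets, and on $(\mathcal{M})_1$ the $\sigma$-weak topology is compact, metrizable, and in particular sequentially compact. The set $\mathcal{P}(\mathcal{M})$ of projections in $\mathcal{M}$ is $d$-closed (a $\sigma$-strong-$*$ limit of projections is a projection), and $\mathcal{Z}(\mathcal{M})\cap(\mathcal{M})_1$ is $d$-closed; hence the non-central projections form a relatively $d$-open subset of $\mathcal{P}(\mathcal{M})$, so they decompose as $\bigcup_{n\ge 1}K_n$ with $K_n:=\{P\in\mathcal{P}(\mathcal{M}):d(P,\mathcal{Z}(\mathcal{M})\cap(\mathcal{M})_1)\ge 1/n\}$ each $d$-closed. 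Since $T$ is reducible exactly when $W^*(T)'\cap\mathcal{M}$ contains a non-central projection (pass to a spectral projection of a non-central self-adjoint element of the relative commutant), one gets $\operatorname{Red}(\mathcal{M})=\bigcup_{n}\mathcal{R}_n$, where $\mathcal{R}_n:=\{T\in\mathcal{M}:PT=TP\text{ for some }P\in K_n\}$.

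It then remains to prove that $\overline{\mathcal{R}_n}^{\,\|\cdot\|}\subseteq\operatorname{Red}(\mathcal{M})$ for each $n$; granting this, $\operatorname{Red}(\mathcal{M})=\bigcup_n\overline{\mathcal{R}_n}^{\,\|\cdot\|}$ is norm-$F_\sigma$, so $\operatorname{IR}(\mathcal{M})$ is a norm-dense $G_\delta$ set and we are done. To prove the inclusion, take $T_k\to T$ in norm with $T_k\in\mathcal{R}_n$, witnessed by $P_k\in K_n$; then $\|P_kT-TP_k\|\le 2\|T_k-T\|\to 0$, so, passing to a $\sigma$-weakly convergent subsequence $P_k\to P\in(\mathcal{M})_1$, one obtains $PT=TP$ and $PT^*=T^*P$, i.e. $P\in W^*(T)'\cap\mathcal{M}$. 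If $P$ happens to be a projection, then a $\sigma$-weakly convergent net of projections with projection limit is automatically $\sigma$-strongly convergent, hence $d$-convergent, so $d(P,\mathcal{Z}(\mathcal{M})\cap(\mathcal{M})_1)\ge 1/n$; thus $P$ is non-central and $T$ is reducible. The main obstacle—and the point where genuinely new input is needed beyond the classical argument—is the case in which this $\sigma$-weak limit $P$ is \emph{not} a projection (the $P_k$ may $\sigma$-weakly ``average out''): one must still rule out $P\in\mathcal{Z}(\mathcal{M})$ in order to extract a non-central element of $W^*(T)'\cap\mathcal{M}$, and here the $d$-distance lower bound on the $P_k$ must be exploited \emph{through} the near-commutation $\|P_kT-TP_k\|\to 0$ (for instance via an ultrapower/asymptotic argument), rather than through $\sigma$-weak lower semicontinuity of the norm, which runs the wrong way. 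This is where I expect essentially all of the technical work to lie.
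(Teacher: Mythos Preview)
Your norm-density argument is correct and matches the paper. The gap is exactly where you locate it, and it is real: the $\sigma$-weak limit $P$ of your projections $P_k\in K_n$ need not be a projection, and nothing prevents it from landing in $\mathcal{Z}(\mathcal{M})$. Your lower bound $d(P_k,\mathcal{Z}(\mathcal{M})\cap(\mathcal{M})_1)\ge 1/n$ is taken in a $\sigma$-strong-$*$ metric, which is \emph{not} lower semicontinuous for $\sigma$-weak convergence, so it transmits no information to the limit. The ultrapower route you gesture at could be made to work, but it is heavy machinery for what should be a soft argument.

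The paper sidesteps the obstacle by a one-line change of parametrization: replace projections by positive contractions. Set $\mathcal{P}=\{A\in\mathcal{M}:0\le A\le I\}$ and $\mathcal{P}_0=\mathcal{P}\setminus\mathcal{Z}(\mathcal{M})$. Since $W^*(T)'\cap\mathcal{M}\supseteq\mathcal{Z}(\mathcal{M})$, the operator $T$ is reducible if and only if some $A\in\mathcal{P}_0$ commutes with $T$ and $T^*$; one does not need $A$ to be a projection. Now $\mathcal{P}$ is weak-operator \emph{compact} and $\mathcal{Z}(\mathcal{M})$ is weak-operator closed, so $\mathcal{P}_0$ is locally weak-operator compact; separability makes the weak-operator topology on $\mathcal{P}$ metrizable, hence $\mathcal{P}_0=\bigcup_n C_n$ with each $C_n$ weak-operator compact. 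Your closure step then goes through cleanly: if $T_k\to T$ in norm with witnesses $A_k\in C_n$, a weak-operator limit point $A$ of $(A_k)$ lies in $C_n\subseteq\mathcal{P}_0$ and commutes with $T,T^*$, so $T$ is reducible. No projection property of the limit is required, and the ``averaging out'' phenomenon you worried about is irrelevant. This is precisely the argument the paper uses, referring to \cite{WFS} for the routine remainder.
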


\begin{proof}
Let $\mathcal{M}$ be a separable von Neumann algebra.
Then the set of irreducible operators in $\mathcal{M}$ is norm-dense by \Cref{thm norm-dense}.
Let
\begin{equation*}
  \mathcal{P}=\{A\in\mathcal{M}\colon 0\leqslant A\leqslant I\}
  \quad\text{and}\quad
  \mathcal{P}_0=\mathcal{P}\setminus\mathcal{Z}(\mathcal{M}).
\end{equation*}
Since $\mathcal{P}$ is weak-operator compact and $\mathcal{Z}(\mathcal{M})$ is weak-operator closed, $\mathcal{P}_0$ is locally weak-operator compact.
The remaining part of the proof is a slight modification of that of \cite[Theorem 2.1]{WFS}.
\end{proof}

\subsection{Proof of \texorpdfstring{\Cref{thm sum-irreducible}}{Thm-cref}}\label{subsec sum-irreducible}
We present a standard result in the theory of von Neumann algebras as follows.

\begin{lemma}\label{lem P<I-P}
Let $P$ be a projection in a von Neumann algebra $\mathcal{M}$ such that $C_P=I$ and $P\precsim I-P$.
Then there exists a family of projections $\{P_j\}_{j\in\Lambda}$ and an index $j_0\in\Lambda$ such that
\begin{equation*}
  I-P=\sum_{j\in\Lambda}P_j\quad\text{and}\quad P_j\precsim P_{j_0}\sim P\quad\text{for all}~j\in\Lambda.
\end{equation*}
\end{lemma}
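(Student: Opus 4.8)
The plan is to carve one copy of $P$ out of $I-P$ and then exhaust the remainder by a maximal family of small subprojections, with the comparison theorem and the hypothesis $C_P=I$ together preventing the exhaustion from stalling.

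First I would use $P\precsim I-P$ to fix a projection $P_{j_0}\leqslant I-P$ with $P_{j_0}\sim P$, noting that $C_{P_{j_0}}=C_P=I$. Then I would consider the collection of all sets $\mathcal{S}$ of mutually orthogonal nonzero subprojections of $I-P$ such that $P_{j_0}\in\mathcal{S}$ and $Q\precsim P_{j_0}$ for every $Q\in\mathcal{S}$, partially ordered by inclusion. This collection is nonempty (it contains $\{P_{j_0}\}$), and the union of a chain in it is again such a set, since any two of its members lie in a common set of the chain and hence are orthogonal, while each member is subequivalent to $P_{j_0}$. Zorn's lemma then yields a maximal such set, which I reindex as $\{P_j\}_{j\in\Lambda}$ with $j_0\in\Lambda$; put $P_0=\sum_{j\in\Lambda}P_j\leqslant I-P$.

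The crux is to show $P_0=I-P$. Suppose not, so $R:=(I-P)-P_0\neq 0$. By the comparison theorem there is a central projection $Z$ with $ZR\precsim ZP_{j_0}$ and $(I-Z)P_{j_0}\precsim(I-Z)R$, and since $R\neq 0$ at least one of $ZR$, $(I-Z)R$ is nonzero. If $ZR\neq 0$, then $ZR$ is a nonzero subprojection of $R$, hence orthogonal to every $P_j$, with $ZR\precsim ZP_{j_0}\leqslant P_{j_0}$; adjoining it to $\{P_j\}_{j\in\Lambda}$ contradicts maximality. If instead $(I-Z)R\neq 0$, then $I-Z\neq 0$, and since $C_{P_{j_0}}=I$ this forces $(I-Z)P_{j_0}\neq 0$; picking a projection $R'\leqslant(I-Z)R$ with $R'\sim(I-Z)P_{j_0}$ again gives a nonzero subprojection of $R$ with $R'\precsim P_{j_0}$, contradicting maximality. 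Hence $I-P=\sum_{j\in\Lambda}P_j$ with $P_j\precsim P_{j_0}\sim P$ for all $j$.

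I expect the only nonroutine point to be this last dichotomy, and in particular the second case: it is exactly the hypothesis $C_P=I$ (hence $C_{P_{j_0}}=I$) that makes $(I-Z)P_{j_0}\neq 0$ whenever $I-Z\neq 0$, so that a nonzero new summand can always be produced when $P_0\neq I-P$; without full central support the exhaustion could terminate prematurely. Everything else is standard bookkeeping with Murray--von Neumann subequivalence, and $P\precsim I-P$ is used only to place $P_{j_0}$ inside $I-P$.
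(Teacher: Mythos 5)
Your proof is correct and follows essentially the same route as the paper: extract one copy $P_{j_0}\sim P$ inside $I-P$, take a maximal orthogonal family of nonzero subprojections subequivalent to it, and use $C_P=I$ to show the family exhausts $I-P$. The only difference is that where the paper cites \cite[Proposition 6.1.8]{KR2} to produce a new nonzero subprojection $\precsim P$ from $C_{Q_0}C_P\neq 0$, you reprove that fact inline via the comparison theorem.
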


\begin{proof}
Since $P\precsim I-P$, there exists a projection $Q$ in $\mathcal{M}$ such that $P\sim Q\leqslant I-P$.
Let $\{P_j\}_{j\in\Lambda_0}$ be a maximal orthogonal family of nonzero subprojections of $I-P-Q$ such that $P_j\precsim P$ for each $j\in\Lambda_0$.
Then $I-P-Q=\sum_{j\in\Lambda_0}P_j$.
Otherwise, let
\begin{equation*}
  Q_0=I-P-Q-\sum_{j\in\Lambda_0}P_j\ne 0.
\end{equation*}
Then $C_{Q_0}C_P=C_{Q_0}\ne 0$.
By \cite[Proposition 6.1.8]{KR2}, there exists a nonzero projection $P_0\leqslant Q_0$ such that $P_0\precsim P$.
That contradicts the maximality of $\{P_j\}_{j\in\Lambda_0}$.
Let $P_{j_0}=Q$ and $\Lambda=\{j_0\}\cup\Lambda_0$.
This completes the proof.
\end{proof}

The special case $\mathcal{N}=W^*(T)'\cap\mathcal{M}$ of the following lemma will be used in the proof of \Cref{thm sum-irreducible}.

\begin{lemma}\label{lem completely-reducible}
    Let $\mathcal{N}$ be a von Neumann subalgebra of $\mathcal{M}$ with $\mathcal{Z}(\mathcal{M})\subseteq\mathcal{N}\subseteq\mathcal{M}$.
    Suppose that $\mathcal{N}Z\ne\mathcal{Z}(\mathcal{M})Z$ for every nonzero central projection $Z$ in $\mathcal{M}$.
    Then there exists a projection $P$ in $\mathcal{N}$ such that $C_P=C_{I-P}=I$.
    Furthermore, $P$ can be chosen such that $P\precsim I-P$.
\end{lemma}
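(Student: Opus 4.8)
The plan is to construct $P$ by an exhaustion argument over central projections, and then to arrange the subequivalence $P\precsim I-P$ by a single application of the comparison theorem.

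\textbf{Building $P$.} Consider all families $\{(Z_i,P_i)\}_{i}$ in which $\{Z_i\}_i$ is a set of pairwise orthogonal nonzero central projections of $\mathcal{M}$ and each $P_i$ is a projection in $\mathcal{N}Z_i$ with $C_{P_i}=C_{Z_i-P_i}=Z_i$. Order these families by inclusion; Zorn's lemma yields a maximal one. Put $Z_0=\sum_i Z_i$ and $P=\sum_i P_i$. Since $P_i\leqslant Z_i$ and the $Z_i$ are orthogonal and central, $P$ is a projection in $\mathcal{N}$ with $P\leqslant Z_0$, and by additivity of the central support over orthogonal central summands, $C_P=\sum_i C_{P_i}=Z_0$ and $C_{Z_0-P}=\sum_i C_{Z_i-P_i}=Z_0$. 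It remains to show $Z_0=I$. Suppose not, and set $Z=I-Z_0\ne 0$. By hypothesis $\mathcal{N}Z\ne\mathcal{Z}(\mathcal{M})Z=\mathcal{Z}(\mathcal{M}Z)$, so $\mathcal{N}Z$ contains a self-adjoint element outside $\mathcal{Z}(\mathcal{M}Z)$, and hence one of its spectral projections gives a projection $P'\in\mathcal{N}Z$ that is not a central projection of $\mathcal{M}$. Let $Z_1=C_{P'}\leqslant Z$; then $Z_1\ne 0$, and $Z_1-P'\ne 0$ (otherwise $P'=Z_1$ would be central). Let $Z_2=C_{Z_1-P'}$; then $0\ne Z_2\leqslant Z_1\leqslant Z$. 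Set $P''=Z_2P'\in\mathcal{N}Z_2$. Using $C_{AZ}=ZC_A$ for central $Z$, one computes $C_{P''}=Z_2C_{P'}=Z_2$, and since $Z_2-P''=(Z_1-P')Z_2$, also $C_{Z_2-P''}=Z_2C_{Z_1-P'}=Z_2$. As $Z_2\leqslant Z$ is orthogonal to every $Z_i$, adjoining $(Z_2,P'')$ strictly enlarges the family, contradicting maximality. Hence $Z_0=I$, which proves the first assertion: $P\in\mathcal{N}$ with $C_P=C_{I-P}=I$.

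\textbf{Arranging $P\precsim I-P$.} Given such a $P$, apply the comparison theorem \cite[Theorem 6.2.7]{KR2} in $\mathcal{M}$ to the projections $P$ and $I-P$: there is a central projection $Z$ with $ZP\precsim Z(I-P)$ and $(I-Z)(I-P)\precsim(I-Z)P$. Define $P_1=ZP+(I-Z)(I-P)$. Since $Z\in\mathcal{Z}(\mathcal{M})\subseteq\mathcal{N}$ and $P\in\mathcal{N}$, we have $P_1\in\mathcal{N}$, and $P_1$ is a projection, being the sum of two projections supported in the complementary central summands $Z\mathcal{M}$ and $(I-Z)\mathcal{M}$. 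Then $I-P_1=Z(I-P)+(I-Z)P$, and additivity of the central support over orthogonal central summands gives $C_{P_1}=ZC_P+(I-Z)C_{I-P}=I$ and $C_{I-P_1}=ZC_{I-P}+(I-Z)C_P=I$. Finally $P_1\precsim I-P_1$: compressing by $Z$ this reads $ZP\precsim Z(I-P)$, and compressing by $I-Z$ it reads $(I-Z)(I-P)\precsim(I-Z)P$. Replacing $P$ by $P_1$ completes the proof.

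\textbf{Main obstacle.} Everything is formal manipulation with central supports except the step in the exhaustion argument where one must produce a \emph{single} nonzero central corner on which both a projection from $\mathcal{N}$ and its complement have full central support. The two-stage localization — first to $Z_1=C_{P'}$, then to $Z_2=C_{Z_1-P'}$ — is exactly what makes this work, and it is where the hypothesis ``$\mathcal{N}Z\ne\mathcal{Z}(\mathcal{M})Z$ for every nonzero central $Z$'' is used; the rest is bookkeeping with $C_{AZ}=ZC_A$ and $C_{\bigoplus A_i}=\bigoplus C_{A_i}$.
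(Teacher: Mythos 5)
Your proof is correct and follows essentially the same route as the paper: a maximality (Zorn) argument over orthogonal central pieces, with the key two-stage localization to $Z_2=C_{Z_1-P'}$ where $Z_1=C_{P'}$ (the paper's $Z_0=C_QC_{Z-Q}$ with $Q=P'$), followed by the comparison theorem and the switch $P\mapsto ZP+(I-Z)(I-P)$. The only cosmetic difference is that you index the maximal family by the central projections $Z_i$ rather than by the projections $P_j$ with mutually orthogonal central supports, which is the same bookkeeping.
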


\begin{proof}
    Let $\{P_j\}_{j\in\Lambda}$ be a maximal family of nonzero projections in $\mathcal{N}$ with mutually orthogonal central supports $\{C_{P_j}\}_{j\in\Lambda}$ such that
    \begin{equation*}
      C_{P_j}=C_{C_{P_j}-P_j}\quad\text{for each}~j\in\Lambda.
    \end{equation*}
    We claim that $I=\sum_{j\in\Lambda}C_{P_j}$.
    Otherwise, let $Z=I-\sum_{j\in\Lambda}C_{P_j}\ne 0$.
    By assumption, there exists a nonzero projection $Q\in\mathcal{N}Z\setminus\mathcal{Z}(\mathcal{M})Z$.
    It follows that $Q\ne C_QZ$ and hence $C_Q(Z-Q)\ne 0$.
    Let $Z_0=C_QC_{Z-Q}$ and $P_0=Z_0Q$.
    Then $Z_0$ is a nonzero subprojection of $Z$ and
    \begin{equation*}
      C_{P_0}=Z_0C_Q=Z_0\quad\text{and}\quad C_{Z_0-P_0}=C_{Z_0(Z-Q)}=Z_0C_{Z-Q}=Z_0.
    \end{equation*}
    That contradicts the maximality of $\{P_j\}_{j\in\Lambda}$ when we consider $\{P_0\}\cup\{P_j\}_{j\in\Lambda}$.

    Let $P=\sum_{j\in\Lambda}P_j$.
    Then $C_P=C_{I-P}=I$.
    By the comparison theorem \cite[Theorem 6.2.7]{KR2}, there exists a central projection $Z$ in $\mathcal{M}$ such that
    \begin{equation*}
      ZP\precsim Z(I-P)\quad\text{and}\quad (I-Z)P\succsim(I-Z)(I-P).
    \end{equation*}
    Let $P'=ZP+(I-Z)(I-P)$.
    Then $P'$ has the desired property $P'\precsim I-P'$.
\end{proof}

Now we are ready to provide the proof of \Cref{thm sum-irreducible}.

\begin{proof}[Proof of Theorem {\rm\ref{thm sum-irreducible}}]
    Let $T=A+iB$ be an operator in $\mathcal{M}$, where $A$ and $B$ are self-adjoint.
    We will construct irreducible operators $T_1$ and $T_2$ in $\mathcal{M}$ such that $T=T_1+T_2$.

    Let $\{Z_j\}_{j\in\Lambda}$ be a maximal orthogonal family of central projections in $\mathcal{M}$ such that $TZ_j$ is irreducible in $\mathcal{M}Z_j$ for each $j\in\Lambda$.
    Let $Z_0=\sum_{j\in\Lambda}Z_j$.
    Then $\mathcal{M}=\mathcal{M}Z_0\oplus\mathcal{M}(I-Z_0)$ and $TZ_0$ is irreducible in $\mathcal{M}Z_0$ by \Cref{lem direct-sum}.
    In the von Neumann algebra $\mathcal{M}Z_0$, we can take $T_1=T_2=\frac{1}{2}TZ_0$.
    Thus, it suffices to consider the operator $T(I-Z_0)$ in $\mathcal{M}(I-Z_0)$.

    Without loss of generality, we may assume that $Z_0=0$.
    Then for any nonzero central projection $Z$ in $\mathcal{M}$, we have $\big(W^*(T)'\cap\mathcal{M}\big)Z\ne\mathcal{Z}(\mathcal{M})Z$.
    By \Cref{lem completely-reducible}, there exists a projection $P$ in $W^*(T)'\cap\mathcal{M}$ such that $C_P=C_{I-P}=I$ and $P\precsim I-P$.
    By \Cref{lem P<I-P} and the separability of $\mathcal{M}$, there exists a sequence $\{P_j\}_{j=1}^{\infty}$ of projections in $\mathcal{M}$ such that
    \begin{equation*}
      I-P=\sum_{j=1}^{\infty}P_j\quad\text{and}\quad P_j\precsim P_1\sim P
      \quad\text{for all}~j\geqslant 1.
    \end{equation*}
    Let $\{V_j\}_{j=1}^{\infty}$ be a sequence of partial isometries in $\mathcal{M}$ such that
    \begin{equation*}
      V_j^*V_j=P_j\quad\text{and}\quad V_jV_j^*\leqslant V_1V_1^*=P\quad\text{for all}~j\geqslant 1.
    \end{equation*}
    By \Cref{cor norm-dense}, there exists an irreducible operator $A_0+iB_0$ in $P\mathcal{M}P$, where $A_0$ and $B_0$ are positive invertible operators in $P\mathcal{M}P$.
    Let $Q$ be a projection in $\mathcal{M}$ that commutes with $\{A_0,B_0\}\cup\{V_j\}_{j=1}^{\infty}$.
    Then $QP=PQ$ commutes with $A_0+iB_0$.
    It follows that
    \begin{equation*}
      QP\in\mathcal{Z}(P\mathcal{M}P)=\mathcal{Z}(\mathcal{M})P
    \end{equation*}
    and we can write $QP=ZP$ for some $Z\in\mathcal{Z}(\mathcal{M})$.
    Since
    \begin{equation*}
      QP_j=QV_j^*PV_j=V_j^*QPV_j=V_j^*ZPV_j=ZP_j\quad\text{for all}~j\geqslant 1,
    \end{equation*}
    we have $Q=QP+\sum_{j=1}^{\infty}QP_j=Z\in\mathcal{Z}(\mathcal{M})$.
    Hence $W^*(\{A_0,B_0\}\cup\{V_j\}_{j=1}^{\infty})$ has trivial relative commutant, i.e.,
    \begin{equation}\label{equ A-B-V}
      W^*(\{A_0,B_0\}\cup\{V_j\}_{j=1}^{\infty})'\cap\mathcal{M}
      =\mathcal{Z}(\mathcal{M}).
    \end{equation}
    Let $\{\alpha_j\}_{j=1}^{\infty}$ be a bounded sequence of distinct positive numbers such that
    \begin{equation*}
      \alpha_j>1+\|A_0\|+\|AP\|+\|A(I-P)\|\quad\text{for all}~j\geqslant 1.
    \end{equation*}
    Define
    \begin{equation*}
      X=A_0+\sum_{j=1}^{\infty}\alpha_jP_j\quad\text{and}\quad Y=X+\sum_{j=1}^{\infty}\frac{1}{2^j}(B_0V_j+V_j^*B_0).
    \end{equation*}
    Let $T_1=A+X+iY$ and $T_2=-X+i(B-Y)$.
    It is clear that $T=T_1+T_2$.
    We claim that both $T_1$ and $T_2$ are irreducible in $\mathcal{M}$.

    Note that $P$ commutes with $T=A+iB$, $P\in W^*(A+X)$ by Borel function calculus, and
    \begin{equation*}
      X=PYP+(I-P)Y(I-P)\in W^*(T_1).
    \end{equation*}
    Then $A_0,P_j\in W^*(X)\subseteq W^*(T_1)$ for each $j\geqslant 1$.
    Since $B_0V_1=2P(Y-X)P_1\in W^*(T_1)$ and $B_0$ is a positive invertible operator in $P\mathcal{M}P$, we have $B_0,V_1\in W^*(T_1)$ by polar decomposition.
    Since $B_0V_j=2^jP(Y-X)P_j\in W^*(T_1)$, we have $V_j\in W^*(T_1)$ for each $j\geqslant 1$.
    Similarly, since $P(Y-B)P_j=B_0V_j$, we have $A_0,B_0,V_j\in W^*(T_2)$ for each $j\geqslant 1$.
    Therefore, $T_1$ and $T_2$ are irreducible in $\mathcal{M}$ by \eqref{equ A-B-V}.
\end{proof}

%\section*{Acknowledgments}

\end{document}